\author{Dimitri Dias}
\title[Integral Generalized Apollonian Sphere Packings]{The Local-Global Principle for Integral Generalized Apollonian Sphere Packings}
\newtheorem{theorem}{Theorem}
\newtheorem{lemma}{Lemma}
\newtheorem{definition}{Definition}
\newcommand{\e}{{\mathrm e}}
\newcounter{numrem}
\newenvironment{remark}[1][Remark \arabic{numrem} :]{\refstepcounter{numrem} \begin{trivlist}
\item[\hskip \labelsep {\bfseries #1}]}{\end{trivlist}}
\newenvironment{example}{\begin{quote}%
  \textbf{Example.}%
  \quad
}{%
\end{quote}%
}
\begin{document}

\begin{abstract}
Four mutually tangent spheres form two gaps. In each of these, one can inscribe in a unique way four mutually tangent spheres such that each one of these spheres is tangent to exactly three of the original spheres. Repeating the process gives rise to a generalized Apollonian sphere packing. These packings have remarkable properties. One of them is the local to global principle and will be proven in this paper.
\end{abstract}

\maketitle

\section{Introduction}

\begin{theorem} \label{gap}
Four mutually tangent spheres form two gaps. In each of these gaps, there is a unique way to inscribe four mutually tangent spheres in such a way that, for each one of these spheres, there is exactly one of the original spheres that is not tangent to this one (and the non-tangent sphere is different for each of the four inscribed spheres).
\end{theorem}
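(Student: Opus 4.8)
The plan is to exploit the conformal (M\"obius) invariance of the whole picture. Tangency of spheres, the interchangeability of spheres and planes (a plane being a sphere through $\infty$), and the partition of the complement into the two gaps are all preserved by the group of M\"obius transformations of $\widehat{\mathbb R}^3 = \mathbb R^3\cup\{\infty\}$. I would therefore first normalize the four given spheres to a convenient model, carry out an explicit construction there, and then transport the result back. Concretely, I would send the tangency point of two of the spheres, say $S_1$ and $S_2$, to infinity, so that their images become the parallel planes $z=0$ and $z=2$. The remaining spheres $S_3,S_4$, being tangent to both planes, each have diameter equal to the plane separation, hence equal radii $1$ and centres at height $z=1$; since they are tangent to one another I may place their centres at $(\pm1,0,1)$ after a similarity. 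As all configurations of four mutually tangent spheres are M\"obius-equivalent, it suffices to treat this model, in which the reflection $y\mapsto -y$ interchanges the two gaps.

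For existence I would write the four candidate spheres down directly, using the symmetries $x\mapsto -x$ (swapping $S_3\leftrightarrow S_4$) and $z\mapsto 2-z$ (swapping the two planes) to reduce the number of unknowns. A sphere omitting one of the planes must be tangent to the other plane and to both $S_3$ and $S_4$, which after imposing the symmetry leaves a single radius to determine; a sphere omitting $S_3$ or $S_4$ is tangent to both planes and hence has radius $1$. Solving the resulting tangency equations yields, in the gap $y>0$, the unit spheres centred at $(\pm1,2,1)$ together with the radius-$\tfrac12$ spheres centred at $(0,1,\tfrac32)$ and $(0,1,\tfrac12)$. It then remains only to check the six mutual-tangency relations among these four spheres, together with the fact that each is tangent to exactly three of the originals with the omitted one distinct in each case.

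For uniqueness, note that for each index $i$ the spheres tangent to the three originals $\{S_j:j\ne i\}$ and lying in the chosen gap form a one-parameter pencil, so a priori there are four degrees of freedom, while mutual tangency imposes $\binom 42=6$ conditions. I would show that, in the model, the tangency equations have only finitely many solutions and that the two extra requirements---lying in the prescribed gap and being non-tangent to the omitted original---select exactly one of them. Transporting back by the inverse of the normalizing M\"obius map then gives uniqueness for the original configuration.

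The main obstacle is precisely this over-determination: six tangency conditions on four one-parameter families, so the existence of \emph{any} consistent configuration is a genuine geometric coincidence rather than a dimension count. The crux of the existence part is therefore verifying that the two ``extra'' tangencies hold automatically for the spheres produced by the construction; and the crux of the uniqueness part is ruling out the spurious branches of the square-root solutions---those yielding spheres in the wrong gap or coinciding with one of the $S_i$---so that the gap and non-tangency constraints genuinely leave a single configuration.
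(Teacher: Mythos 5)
Your proposal is correct and follows essentially the same route as the paper: normalize the four spheres by a M\"obius transformation to the model of two parallel planes plus two unit spheres (your model is the paper's Example up to a rigid motion) and then verify the inscribed octuple by explicit elementary geometry. Your explicit coordinates check out (all six mutual tangencies and the three-out-of-four tangency pattern hold), and you supply rather more detail on existence and uniqueness than the paper's one-line proof does.
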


\begin{example} \label{octuref}
Consider the planes $z=1$ and $z=-1$ and the $2$ spheres of radius $1$ and centers $(-1,-1,0)$ and $(-1,1,0)$. These form a set of $4$ mutually tangent spheres, which define two gaps, in each of which we can inscribe $4$ spheres according to the procedure defined before.
\\

In one of the gaps, the two spheres of radius $1$ and centers $(1,1,0)$ and $(1,-1,0)$, and the two spheres of radius $\frac{1}{2}$ and centers $(0,0,\frac{1}{2})$ and $(0,0,-\frac{1}{2})$ can be inscribed.

\begin{center}
\includegraphics[scale=0.24]{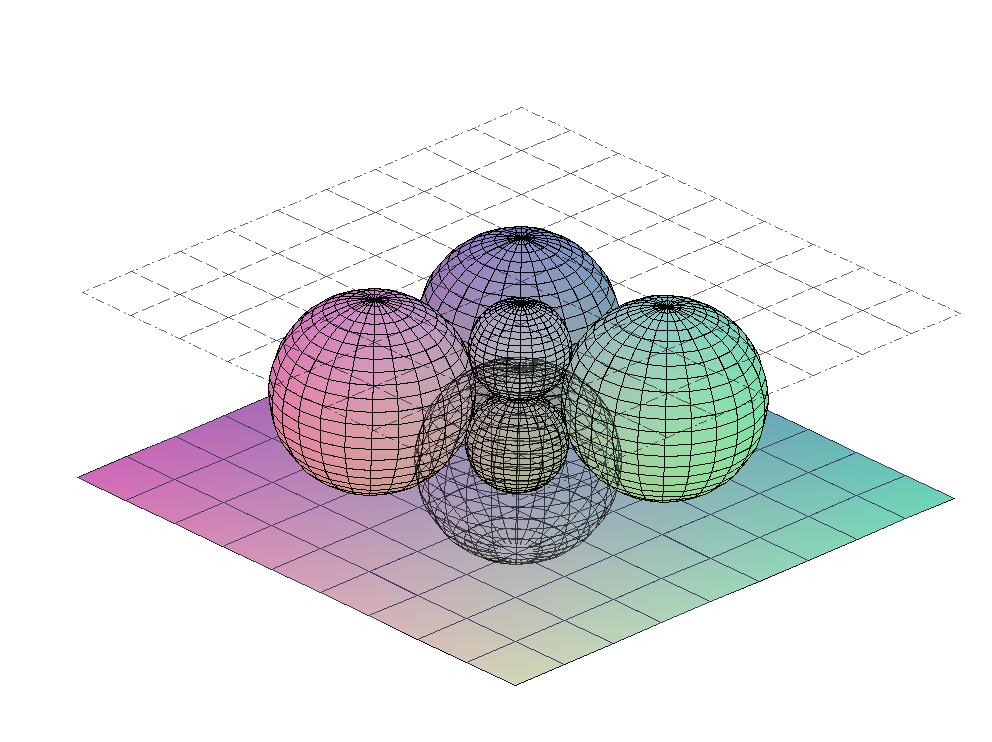} 
\end{center}
\end{example}

\begin{proof}[Proof of Theorem \ref{gap}]
Using a M\"{o}bius transformation, a rotation, a translation and a scaling, one can transform any configuration of four mutually tangent spheres into the configuration seen in Example \ref{octuref}. In this case, the result can be proven using simple geometry.
\end{proof}

Thus, in either of the two gaps formed by four mutually tangent spheres, we can pack four more spheres, forming an octuple. In each octuple, the spheres come in four pairs: each sphere can be paired with the only sphere in the octuple which does not touch it. The spheres of a pair do not touch each other, but are tangent to all the six other spheres in the octuple.
\\

Now, pick four mutually tangent spheres amongst those in the octuple. They again form two gaps, in which, using Theorem \ref{gap}, we can pack four new spheres. Repeating the process results in what we will call a generalized Apollonian sphere packing.
\\

This construction is a generalization to the $3$-dimensional case of the work of Guettler and Mallows in \cite{Mallows}. Analogously to the classical Apollonian circle or sphere packings, and to the generalized Apollonian circle packings, this kind of constructions is the source of several questions. One of them, which will be tackled here, concerns the curvatures (the inverses of the radii) of the spheres appearing in the packing.
\\

The first section will be dedicated to generalities. It will be shown that the curvatures can be described using the orbit under the action of a certain group of integral matrices on a certain vector depending on the packing. This will allow us to study the case of integral sphere packings, in which the spheres all have integer curvatures.
\\

In the second section, we will use a method due to Sarnak \cite{Sarnak} to prove the local-global principle for generalized Apollonian sphere packings. This method was originally used in an attempt to prove the strong positive density conjecture for Apollonian circle packings. It happened to be insufficient, and the proof needed further work by Bourgain and Fuchs \cite{Bourgain} and then Bourgain and Kontorovich \cite{Konto}. In the case of sphere packings, and similarly to the work of Kontorovich on the classical Apollonian sphere packings \cite{Konto2}, Sarnak's method happens to be strong enough, as being in a $3$-dimensional case adds more variables to the problem which ultimately comes to studying the integers represented by a quadratic form.
\\

It should be noted that a similar, but independent, proof has been very recently submitted on arXiv by K. Nakamura \cite{Nakam}.

\subsection*{Acknowledgements}
I would like to thank my supervisor, Professor Andrew Granville, for discussions that greatly helped me with my work and Professor Alex Kontorovich who took time to answer my questions. I would also like to thank my friends Oleksiy Klurman, Crystel Bujold, Kevin Henriot, Mohammad Bardestani, Daniel Fiorilli and Marzieh Mehdizadeh for their helpful comments.

\section{Generalities}

We begin with some generalities about generalized Apollonian sphere packings. Most of this section is a direct generalization of \cite{Mallows}. Their notations and method will be used here.
\\

A sphere $\mathcal{S}$ of curvature (or bend) $b$ (the inverse of its radius) and center $(x,y,z)$ can be described by its ``augmented bend, bend*center'' ($abbc$) coordinates $\textbf{a}(\mathcal{S})=(\overline{b},b,bx,by,bz)$, where $\overline{b}$ is the curvature of the sphere that is the inverse of $\mathcal{S}$ in the unit sphere, that is
\begin{equation*}
\overline{b}=b(x^2 + y^2 + z^2) - \frac{1}{b} \, .
\end{equation*}

For planes, which are considered as spheres of infinite radius, this definition needs to be modified. If our plane has equation $p_1x + p_2y + p_3z = h$, with $(p_1,p_2,p_3)$ a unit vector, we define its $abbc$ coordinates to be $(2h,0,p_1,p_2,p_3)$. 

\subsection{Octuples of spheres}\

In the following, an octuple configuration will denote a set of eight spheres obtained from the procedure of Theorem \ref{gap}. As explained in the introduction, an octuple can be seen as a set of four pairs of spheres, where a pair of spheres consists in a sphere in the octuple and the only sphere that is not tangent to it. The spheres in a pair will usually be denoted by $\mathcal{S}$ and $\mathcal{S}'$.

\begin{remark} \label{outer}
In the case where we have an outer sphere enclosing the seven other spheres, this sphere will be given a negative curvature. There can only be one sphere with a negative curvature in an octuple (and in a packing).
\end{remark}
Let
\begin{equation*}
\textbf{W} = \begin{pmatrix}
0 & -\frac{1}{2} & 0 & 0 & 0 \\ 
-\frac{1}{2} & 0 & 0 & 0 & 0 \\ 
0 & 0 & 1 & 0 & 0 \\ 
0 & 0 & 0 & 1 & 0 \\ 
0 & 0 & 0 & 0 & 1
\end{pmatrix} \, .
\end{equation*}

\begin{lemma} \label{lem1}
Let $\mathcal{S}_1$ and $\mathcal{S}_2$ be two spheres. Then,
\begin{align*}
\textbf{a}(\mathcal{S}_1) \textbf{W} \textbf{a}(\mathcal{S}_2)^t &= 1 \text { if } \mathcal{S}_1=\mathcal{S}_2 \\
&=-1 \text{ if } \mathcal{S}_1 \text{ and } \mathcal{S}_2 \text{ are externally tangent}.
\end{align*}
\end{lemma}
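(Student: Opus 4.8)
The plan is to prove both identities by direct computation from the definition of the $abbc$ coordinates, treating the genuine spheres first and the planes (and the enclosing sphere) afterwards. First I would record the explicit form of the symmetric bilinear pairing attached to $\textbf{W}$: for vectors $\textbf{u}=(u_0,u_1,u_2,u_3,u_4)$ and $\textbf{v}=(v_0,v_1,v_2,v_3,v_4)$,
\begin{equation*}
\textbf{u}\,\textbf{W}\,\textbf{v}^t = -\tfrac12(u_0 v_1 + u_1 v_0) + u_2 v_2 + u_3 v_3 + u_4 v_4 .
\end{equation*}
This reduces the entire statement to substituting coordinates and simplifying.

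Next, assume both spheres have finite curvature, with $\mathcal{S}_i$ of curvature $b_i$ and center $(x_i,y_i,z_i)$, so that $\textbf{a}(\mathcal{S}_i)=(\overline{b_i},b_i,b_i x_i,b_i y_i,b_i z_i)$ with $\overline{b_i}=b_i(x_i^2+y_i^2+z_i^2)-1/b_i$. Substituting, the cross terms $-\tfrac12(\overline{b_1}b_2+b_1\overline{b_2})$ expand to $-\tfrac12 b_1 b_2\big((x_1^2+y_1^2+z_1^2)+(x_2^2+y_2^2+z_2^2)\big)+\tfrac12(b_1/b_2+b_2/b_1)$, while the last three entries contribute $b_1 b_2(x_1 x_2+y_1 y_2+z_1 z_2)$. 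The key algebraic step is that the bend$\times$center block completes the square with the quadratic part of the cross terms, yielding
\begin{equation*}
\textbf{a}(\mathcal{S}_1)\,\textbf{W}\,\textbf{a}(\mathcal{S}_2)^t = -\tfrac12 b_1 b_2\, d^2 + \tfrac12\!\left(\frac{b_1}{b_2}+\frac{b_2}{b_1}\right),
\end{equation*}
where $d^2=(x_1-x_2)^2+(y_1-y_2)^2+(z_1-z_2)^2$ is the squared distance between the two centers.

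From this single formula both cases follow immediately. If $\mathcal{S}_1=\mathcal{S}_2$ then $d=0$ and $b_1=b_2$, so the expression collapses to $\tfrac12(1+1)=1$. If the spheres are externally tangent then $d$ is the sum of the radii, $d=1/b_1+1/b_2$, whence $d^2=(b_1+b_2)^2/(b_1 b_2)^2$; plugging this in gives $-\tfrac{(b_1+b_2)^2}{2b_1 b_2}+\tfrac{b_1^2+b_2^2}{2b_1 b_2}=\tfrac{-2b_1 b_2}{2b_1 b_2}=-1$.

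The remaining, and most delicate, point is the treatment of planes, which carry the modified coordinates $(2h,0,p_1,p_2,p_3)$ with $(p_1,p_2,p_3)$ a unit normal. Here the self-pairing reduces to $p_1^2+p_2^2+p_3^2=1$; a sphere–plane pairing reduces to $b(p_1 x+p_2 y+p_3 z-h)$, that is, $b$ times the signed distance from the center to the plane; and a plane–plane pairing reduces to the dot product of the two unit normals. The hard part will be fixing the orientation conventions—the choice of sign for each unit normal, and the negative curvature assigned to an enclosing sphere as in Remark \ref{outer}—so that \emph{external} tangency consistently produces signed distance $-1/b$, and hence pairing value $-1$. In particular one must verify that a sphere internally tangent to the outer sphere also registers as $-1$: with the outer curvature taken negative, the relation $d=R-r$ becomes $d=-(1/b_1+1/b_2)$, so $d^2$ is unchanged and the same algebra returns $-1$, which is precisely what the negative-curvature convention is designed to achieve.
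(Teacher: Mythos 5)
Your proposal is correct and matches the paper's approach exactly: the paper's entire proof is the phrase \enquote{Easy computations}, and you have carried out precisely those computations, with the identity $\textbf{a}(\mathcal{S}_1)\textbf{W}\textbf{a}(\mathcal{S}_2)^t=-\tfrac12 b_1b_2\,d^2+\tfrac12\bigl(\tfrac{b_1}{b_2}+\tfrac{b_2}{b_1}\bigr)$ doing all the work. The orientation issue for planes that you flag is genuine but resolves itself by orienting each unit normal away from the tangent sphere (consistent with the paper's Example, where the plane $z=1$ receives coordinates $(2,0,0,0,1)$ and pairs to $-1$ with the inscribed unit spheres), so the signed distance from a tangent center is indeed $-1/b$.
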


\begin{proof}
Easy computations.
\end{proof}

\begin{remark}
If, as in Remark \ref{outer}, we define the curvature of the outer sphere to be negative, we still have $\textbf{a}(\mathcal{S}_1) \textbf{W} \textbf{a}(\mathcal{S}_2)^t = -1$ if the spheres are internally tangent.
\end{remark}

\begin{lemma} \label{lem2}
Let $\mathcal{S}$ and $\mathcal{S}'$ be two non-tangent spheres in an octuple configuration. Then,
\begin{equation*}
\textbf{a}(\mathcal{S}) \textbf{W} \textbf{a}(\mathcal{S}')^t = -3 \, .
\end{equation*}
\end{lemma}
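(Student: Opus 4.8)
The plan is to follow exactly the strategy used to prove Theorem \ref{gap}: reduce an arbitrary octuple to the explicit one of Example \ref{octuref}, and then compute. Write $B(\mathcal{S}_1,\mathcal{S}_2) = \textbf{a}(\mathcal{S}_1)\textbf{W}\textbf{a}(\mathcal{S}_2)^t$ for the symmetric bilinear form appearing in Lemma \ref{lem1}. The key observation is that $B$ is invariant under the transformations used there — M\"{o}bius inversions, rotations, translations and scalings — because each of them acts on the $abbc$ coordinates by a linear map preserving the form $\textbf{W}$; equivalently, $B(\mathcal{S}_1,\mathcal{S}_2)$ is, up to normalization, the inversive distance between the two spheres, which is a conformal invariant. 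This is consistent with the values $1$ and $-1$ recorded in Lemma \ref{lem1}, and it means $B$ does not change when we move the configuration into a standard position.

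First I would record that, by Theorem \ref{gap}, every octuple is the image of the octuple of Example \ref{octuref} under such a transformation, and that these transformations preserve both tangency and non-tangency. Hence they carry the four non-tangent pairs of the standard octuple bijectively onto the four pairs of the given one, and since $B$ is invariant it suffices to verify the identity $B(\mathcal{S},\mathcal{S}') = -3$ for each pair in the single explicit configuration of Example \ref{octuref}.

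Next I would compute the $abbc$ coordinates of the eight spheres there. The two planes $z=1$ and $z=-1$ give $(2,0,0,0,1)$ and $(2,0,0,0,-1)$; the four unit spheres centred at $(\pm 1,\pm 1,0)$ give the vectors $(1,1,\pm 1,\pm 1,0)$; and the two spheres of radius $\tfrac12$ centred at $(0,0,\pm\tfrac12)$ give $(0,2,0,0,\pm 1)$. Reading off the tangencies shows that each plane is paired with one of the small spheres and each unit sphere with the unit sphere diagonally opposite to it, so that in every case an original sphere is paired with the inscribed sphere that misses it. A direct evaluation of $B$ on these four pairs — for instance $B\big((1,1,-1,-1,0),(1,1,1,1,0)\big) = -1-1-1 = -3$ — returns $-3$ every time, which is the claim.

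The one genuinely delicate point is the sign and orientation bookkeeping. The value $B=-1$ on tangent spheres (Lemma \ref{lem1} and the remark following it) holds only once the normals of the planes are oriented, and any outer sphere given the negative curvature of Remark \ref{outer}, consistently with the orientation of the gap; this is precisely what forces the plane $z=-1$ to be taken as $(2,0,0,0,-1)$ rather than $(-2,0,0,0,1)$, and it is what makes the cross terms sum to $-3$ instead of $+3$. I would also make explicit that the transformations of Theorem \ref{gap} preserve $\textbf{W}$, since the entire reduction rests on this. A more computational alternative, bypassing the reduction, is to note that the eight $abbc$ vectors lie in the $5$-dimensional space carrying the non-degenerate form $\textbf{W}$, so their Gram matrix has rank at most $5$; together with the known entries ($1$ on the diagonal and $-1$ on every tangent pair) and the symmetry of the configuration, the vanishing of the $6\times 6$ minors then pins the four unknown cross-pair entries to $-3$.
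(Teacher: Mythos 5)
Your proposal is correct and follows essentially the same route as the paper: verify the identity explicitly on the standard octuple of Example \ref{octuref}, then transport it to an arbitrary octuple using the fact that the M\"{o}bius transformations act on $abbc$ coordinates by matrices $\textbf{m}$ satisfying $\textbf{m}\textbf{W}\textbf{m}^t=\textbf{W}$. Your explicit coordinate computations and the remark about orienting the plane $z=-1$ as $(2,0,0,0,-1)$ are accurate and in fact fill in details the paper leaves to the reader.
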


\begin{proof}
This can be easily proven for the octuple introduced in the previous example. Any octuple can be seen as the image by a M\"{o}bius transformation of this configuration, i.e., a composition of rotations, translations, scalings and inversions in the unit sphere. Thus, we just have to study the effect of rotations, translations, scalings and inversions in the unit sphere on the $abbc$ coordinates of a sphere.
\\
\\
Let $\textbf{a}(\mathcal{S})$ be the $abbc$ coordinates of a sphere $\mathcal{S}$. Then,
\begin{itemize}
\renewcommand{\labelitemi}{$\bullet$}
\item a scaling by $\lambda$ consists in replacing $\textbf{a}(\mathcal{S})$ by $\textbf{a}(\mathcal{S}) \textbf{m}$, where
\begin{equation*}
\textbf{m}=\begin{pmatrix}
\frac{1}{\lambda} & 0 & 0 & 0 & 0 \\ 
0 & \lambda & 0 & 0 & 0 \\ 
0 & 0 & 1 & 0 & 0 \\ 
0 & 0 & 0 & 1 & 0 \\ 
0 & 0 & 0 & 0 & 1
\end{pmatrix} \, ,
\end{equation*}
\item a rotation consists in replacing $\textbf{a}(\mathcal{S})$ by $\textbf{a}(\mathcal{S}) \textbf{m}$, where
\begin{equation*}
\textbf{m}=\begin{pmatrix}
1 & 0 & 0 & 0 & 0 \\ 
0 & 1 & 0 & 0 & 0 \\ 
0 & 0 & 1 & 0 & 0 \\ 
0 & 0 & 0 & \cos \theta & -\sin \theta \\ 
0 & 0 & 0 & \sin \theta & \cos \theta
\end{pmatrix} \text{ or }
\begin{pmatrix}
1 & 0 & 0 & 0 & 0 \\ 
0 & 1 & 0 & 0 & 0 \\ 
0 & 0 & \cos \theta & 0 & \sin \theta \\ 
0 & 0 & 0 & 1 & 0 \\ 
0 & 0 & -\sin \theta & 0 & \cos \theta
\end{pmatrix} \text{ or }
\begin{pmatrix}
1 & 0 & 0 & 0 & 0 \\ 
0 & 1 & 0 & 0 & 0 \\ 
0 & 0 & \cos \theta & -\sin \theta & 0 \\ 
0 & 0 & \sin \theta & \cos \theta &0 \\ 
0 & 0 & 0 & 0 & 1 
\end{pmatrix} \, ,
\end{equation*}
\item a translation by a vector $(x,y,z)$ consists in replacing $\textbf{a}(\mathcal{S})$ by $\textbf{a}(\mathcal{S}) \textbf{m}$, where
\begin{equation*}
\textbf{m}=\begin{pmatrix}
1 & 0 & 0 & 0 & 0 \\ 
x^2 + y^2 + z^2 & 1 & x & y & z \\ 
2x & 0 & 1 & 0 & 0 \\ 
2y & 0 & 0 & 1 &0 \\ 
2z & 0 & 0 & 0 & 1 
\end{pmatrix} \, ,
\end{equation*}
\item the inversion in the unit sphere consists in replacing $\textbf{a}(\mathcal{S})$ by $\textbf{a}(\mathcal{S}) \textbf{m}$, where
\begin{equation*}
\textbf{m}=\begin{pmatrix}
0 & 1 & 0 & 0 & 0 \\ 
1 & 0 & 0 & 0 & 0 \\ 
0 & 0 & 1 & 0 & 0 \\ 
0 & 0 & 0 & 1 &0 \\ 
0 & 0 & 0 & 0 & 1 
\end{pmatrix} \, .
\end{equation*}
\end{itemize}
Now, it can easily be checked that $\textbf{m} \textbf{W} \textbf{m}^t=\textbf{W}$ for any of the previous matrices.
\end{proof}

\begin{lemma}
In an octuple containing the spheres $\mathcal{S}_1$ and $\mathcal{S}_1'$, $\mathcal{S}_2$ and $\mathcal{S}_2'$, $\mathcal{S}_3$ and $\mathcal{S}_3'$, $\mathcal{S}_4$ and $\mathcal{S}_4'$, we have
\begin{equation*}
\textbf{a}(\mathcal{S}_1) + \textbf{a}(\mathcal{S}_1') = \textbf{a}(\mathcal{S}_2) + \textbf{a}(\mathcal{S}_2') = \textbf{a}(\mathcal{S}_3) + \textbf{a}(\mathcal{S}_3') = \textbf{a}(\mathcal{S}_4) + \textbf{a}(\mathcal{S}_4') \, .
\end{equation*}
\end{lemma}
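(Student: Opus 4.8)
The plan is to work with the symmetric bilinear form $B(\textbf{x},\textbf{y}) = \textbf{x}\,\textbf{W}\,\textbf{y}^t$ and to exploit that its Gram matrix on the eight coordinate vectors of the octuple is completely determined by Lemmas \ref{lem1} and \ref{lem2}. Writing $\textbf{v}_i = \textbf{a}(\mathcal{S}_i)$ and $\textbf{v}_i' = \textbf{a}(\mathcal{S}_i')$, Lemma \ref{lem1} gives $B(\textbf{v}_i,\textbf{v}_i) = B(\textbf{v}_i',\textbf{v}_i') = 1$ and $B(\textbf{u},\textbf{u}') = -1$ for any two distinct tangent spheres $\textbf{u},\textbf{u}'$ of the octuple, while Lemma \ref{lem2} gives $B(\textbf{v}_i,\textbf{v}_i') = -3$ for the non-tangent partners. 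Since within an octuple the only non-tangent pairs are the four partner pairs, these values cover all pairwise products.

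First I would set $\textbf{s}_i = \textbf{v}_i + \textbf{v}_i'$ and compute $B(\textbf{s}_i,\textbf{u})$ for every sphere $\textbf{u}$ of the octuple. The key point is that this value is always $-2$, independently of $i$ and of $\textbf{u}$: if $\textbf{u} \in \{\textbf{v}_i,\textbf{v}_i'\}$ one gets $1 + (-3) = -2$, and if $\textbf{u}$ is any of the six remaining (tangent) spheres one gets $(-1) + (-1) = -2$. Consequently, for any two indices $i$ and $j$, $B(\textbf{s}_i - \textbf{s}_j,\textbf{u}) = 0$ for all eight vectors $\textbf{u}$ of the octuple.

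To finish, I would argue that the eight coordinate vectors span $\mathbb{R}^5$. Granting this, $\textbf{s}_i - \textbf{s}_j$ is $B$-orthogonal to all of $\mathbb{R}^5$; as $\textbf{W}$ is invertible the form $B$ is nondegenerate, so $\textbf{s}_i - \textbf{s}_j = 0$, which is exactly the claimed equality. The spanning statement I would verify by an explicit rank computation on the octuple of Example \ref{octuref} (the eight listed vectors already yield all five standard basis vectors by taking suitable sums and differences), and then transport it to an arbitrary octuple: every octuple is the image of this one under a M\"{o}bius transformation, which by the proof of Lemma \ref{lem2} acts on $abbc$ coordinates as right multiplication by a product of the matrices $\textbf{m}$, each of which is invertible. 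Right multiplication by an invertible matrix preserves rank, so the eight vectors span $\mathbb{R}^5$ for every octuple.

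The main obstacle is precisely this last nondegeneracy step. Because $B$ is indefinite, being $B$-orthogonal to a given collection of vectors does not force a vector to vanish; one really must know that the eight octuple vectors exhaust the whole space, which is why reducing to the concrete configuration of Example \ref{octuref} and invoking M\"{o}bius invariance is the cleanest route. By contrast, the arithmetic heart of the argument is the uniform value $B(\textbf{s}_i,\textbf{u}) = -2$ in the middle step, after which the result is immediate.
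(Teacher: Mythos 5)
Your proof is correct, and its arithmetic core coincides with the paper's: both arguments rest on the observation that the full Gram matrix of the octuple under $B(\textbf{x},\textbf{y})=\textbf{x}\textbf{W}\textbf{y}^t$ is pinned down by Lemmas \ref{lem1} and \ref{lem2}, and your uniform value $B(\textbf{s}_i,\textbf{u})=-2$ is exactly the paper's computation that $\textbf{w}_i\textbf{W}\textbf{w}_j^t=-1$ and $\textbf{a}(\mathcal{S}_k)\textbf{W}\textbf{w}_j^t=-1$ for the half-sums $\textbf{w}_j=\textbf{s}_j/2$. Where you diverge is the endgame. The paper forms the $5\times 5$ matrix $\textbf{F}_j$ with rows $\textbf{a}(\mathcal{S}_1),\dots,\textbf{a}(\mathcal{S}_4),\textbf{w}_j$ and notes that $\textbf{F}_i\textbf{W}\textbf{F}_j^t=\textbf{K}=\textbf{F}_j\textbf{W}\textbf{F}_j^t$ with $\det\textbf{K}=-16\ne 0$; invertibility of $\textbf{K}$ forces $\textbf{F}_j$ to be invertible, so one cancels $\textbf{W}\textbf{F}_j^t$ and gets $\textbf{F}_i=\textbf{F}_j$ directly. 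In other words, the spanning fact you identify as the main obstacle comes for free there: the nonvanishing of the Gram determinant already certifies that five of the relevant vectors are linearly independent, with no need to exhibit a basis or to reduce to the model octuple of Example \ref{octuref}. Your route, by contrast, imports the spanning of the eight vectors from an explicit rank computation in the model configuration plus M\"{o}bius transport (legitimate, since the paper's proof of Lemma \ref{lem2} shows these transformations act by right multiplication by matrices $\textbf{m}$ satisfying $\textbf{m}\textbf{W}\textbf{m}^t=\textbf{W}$, hence invertible and rank-preserving). Both are valid; the paper's version is more self-contained, yours makes the geometric content of the nondegeneracy step more visible at the cost of one extra reduction.
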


\begin{proof}
For $1 \leq j \leq 4$, let $\textbf{w}_j=\frac{\textbf{a}(\mathcal{S}_j)+\textbf{a}(\mathcal{S}_j')}{2}$.
\\

From Lemmas \ref{lem1} and \ref{lem2}, we have that $\textbf{w}_i \textbf{W} \textbf{w}_j^t = -1$ for every $1 \leq i,j \leq 4$. Therefore, if $\textbf{F}_j$ is the $5 \times 5$ matrix with rows $\textbf{a}(\mathcal{S}_1), \textbf{a}(\mathcal{S}_2), \textbf{a}(\mathcal{S}_3), \textbf{a}(\mathcal{S}_4)$ and $\textbf{w}_j$, we have
\begin{equation} \label{FKF}
\textbf{F}_i \textbf{W} \textbf{F}_j^t = \textbf{K} = \textbf{F}_j \textbf{W} \textbf{F}_j^t
\end{equation}
with
\begin{equation*}
\textbf{K}=\begin{pmatrix}
1 & -1 & -1 & -1 & -1 \\ 
-1 & 1 & -1 & -1 & -1 \\ 
-1 & -1 & 1 & -1 & -1 \\ 
-1 & -1 & -1 & 1 & -1 \\ 
-1 & -1 & -1 & -1 & -1
\end{pmatrix} \, .
\end{equation*}
This gives us that $\textbf{F}_i=\textbf{F}_j$ for every $1 \leq i,j \leq 4$. This implies that $\textbf{w}_i = \textbf{w}_j$ for every $1 \leq i,j \leq 4$.
\end{proof}

This result allows us to use the following convenient representation of any octuple.

\begin{definition} \label{defi}
Given an octuple containing the spheres $\mathcal{S}_1$ and $\mathcal{S}_1'$, $\mathcal{S}_2$ and $\mathcal{S}_2'$, $\mathcal{S}_3$ and $\mathcal{S}_3'$, $\mathcal{S}_4$ and $\mathcal{S}_4'$, we define a matrix $\textbf{F}$ associated to the octuple to be a matrix whose first four rows are the $abbc$ coordinates of four of the spheres (one from each pair) and the fifth row is the average of the $abbc$ coordinates of the two spheres of any pair.
\end{definition}

\begin{remark}
Since we can choose the pairs in different orders and since we have two choices of a representative for each pair, there are (at most) $384$ different $\textbf{F}$ matrices associated to the same octuple.
\end{remark}

\begin{theorem} \label{eq}
Let $\mathcal{S}_1$, $\mathcal{S}_2$, $\mathcal{S}_3$ and $\mathcal{S}_4$ be four mutually tangent spheres with respective curvatures $b_1,b_2,b_3,b_4$, enclosing two gaps. Then, the curvatures of the two sets of four spheres that can be inscribed in these gaps are given by $(2\omega - b_1,2\omega - b_2,2\omega - b_3,2\omega - b_4)$ and $(2\omega' - b_1,2\omega' - b_2,2\omega' - b_3,2\omega' - b_4)$, where $\omega$ and $\omega'$ are the roots of
\begin{equation} \label{equa}
2\omega^2 - 2 \omega (b_1+b_2+b_3+b_4) + b_1^2+b_2^2+b_3^2+b_4^2=0
\end{equation}
\end{theorem}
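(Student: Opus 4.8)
The plan is to recognize that the four original spheres together with the four spheres inscribed in a single gap form an octuple, and then to collapse the five-dimensional $abbc$-data down to one scalar equation for the unknown curvatures by reading a single entry of the inverted Gram identity (\ref{FKF}).

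First I would invoke Theorem \ref{gap}: the four spheres inscribed in a given gap, together with $\mathcal{S}_1,\dots,\mathcal{S}_4$, form an octuple in which the inscribed sphere $\mathcal{S}_i'$ is precisely the one not tangent to $\mathcal{S}_i$, so that $(\mathcal{S}_i,\mathcal{S}_i')$ are the four pairs. The lemma on pairwise sums then furnishes a common vector $\textbf{w}=\tfrac12\bigl(\textbf{a}(\mathcal{S}_i)+\textbf{a}(\mathcal{S}_i')\bigr)$, independent of $i$, whence $\textbf{a}(\mathcal{S}_i')=2\textbf{w}-\textbf{a}(\mathcal{S}_i)$. Reading off the second ($abbc$-)coordinate, if $\omega$ denotes the second coordinate of $\textbf{w}$, the curvature of $\mathcal{S}_i'$ is exactly $2\omega-b_i$. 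This already produces the claimed shape of the two solution sets, and it remains only to determine the admissible values of $\omega$.

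The crux is to pin down $\omega$ through (\ref{FKF}). Let $\textbf{F}$ be the matrix with rows $\textbf{a}(\mathcal{S}_1),\dots,\textbf{a}(\mathcal{S}_4),\textbf{w}$, so that $\textbf{F}\textbf{W}\textbf{F}^t=\textbf{K}$. Since $\textbf{W}$ and $\textbf{K}$ are invertible (I would simply exhibit $\textbf{K}^{-1}$), $\textbf{F}$ is invertible and $\textbf{W}^{-1}=\textbf{F}^t\textbf{K}^{-1}\textbf{F}$. Comparing the $(2,2)$ entries of both sides is the heart of the argument: the $(2,2)$ entry of $\textbf{W}^{-1}$ vanishes (its top-left corner is block-antidiagonal), while the right-hand side equals $\textbf{c}^t\textbf{K}^{-1}\textbf{c}$, where $\textbf{c}=(b_1,b_2,b_3,b_4,\omega)^t$ is the second column of $\textbf{F}$. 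A direct computation gives $\textbf{K}^{-1}$ with $\tfrac12$ times the identity in its $4\times4$ upper-left block, entries $-\tfrac12$ along the last row and column (off the corner), and $(5,5)$ entry equal to $1$; substituting this turns $\textbf{c}^t\textbf{K}^{-1}\textbf{c}=0$ into $\omega^2-\omega(b_1+b_2+b_3+b_4)+\tfrac12(b_1^2+b_2^2+b_3^2+b_4^2)=0$, which is exactly (\ref{equa}) after clearing the denominator.

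Finally I would account for both gaps. Each of the two gaps yields such an octuple, hence a vector $\textbf{w}$ (respectively $\textbf{w}'$) whose second coordinate satisfies (\ref{equa}); since the four linear conditions $\textbf{a}(\mathcal{S}_i)\textbf{W}\textbf{w}^t=-1$ confine $\textbf{w}$ to an affine line, on which the quadric $\textbf{w}\textbf{W}\textbf{w}^t=-1$ is met in at most two points, the two second coordinates $\omega,\omega'$ are precisely the two roots of (\ref{equa}). The main obstacle is the middle step: seeing that extracting one entry of the inverted identity reduces all the geometric data to the single scalar relation we want. Once that observation is made, the remainder is the bookkeeping of computing $\textbf{K}^{-1}$ and checking that the two gaps exhaust the two roots.
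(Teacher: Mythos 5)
Your proposal is correct and follows essentially the same route as the paper: both rest on inverting the Gram identity $\textbf{F}\textbf{W}\textbf{F}^t=\textbf{K}$ to get $\textbf{F}^t\textbf{K}^{-1}\textbf{F}=\textbf{W}^{-1}$ and reading off the $(2,2)$ entry, whose vanishing is exactly (\ref{equa}). Your version merely fills in details the paper leaves implicit (the explicit form of $\textbf{K}^{-1}$, the identification $\textbf{a}(\mathcal{S}_i')=2\textbf{w}-\textbf{a}(\mathcal{S}_i)$, and the argument that the two gaps account for both roots), and your computations check out.
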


\begin{proof}
Using (\ref{FKF}), we have that $\textbf{F} \textbf{W} \textbf{F}^t = \textbf{K}$, thus $\textbf{F}^t \textbf{K}^{-1} \textbf{F} = \textbf{W}^{-1}$. Looking at the $(2,2)$ element of this equation gives us (\ref{equa}).
\end{proof}

\subsection{Generalized sphere packings}\

Given four mutually tangent spheres, enclosing two gaps, we have a unique way to inscribe four mutually tangent spheres in each of these gaps using the construction of Theorem \ref{gap}. The four original mutually tangent spheres belong to two different octuples. Equation (\ref{equa}) of Theorem \ref{eq} allows us to pass from one to another, since it implies that $\omega + \omega' = b_1 + b_2 + b_3 + b_4$.
\\

More precisely, if $\textbf{F}$ is the matrix associated to one of these octuples, with its first four rows containing the $abbc$ coordinates of the four original spheres, then the other octuple can be described by the matrix $\textbf{A}_5 \cdot \textbf{F}$, where
\begin{equation*}
\textbf{A}_5=
\begin{pmatrix}
1 & 0 & 0 & 0 & 0 \\ 
0 & 1 & 0 & 0 & 0 \\ 
0 & 0 & 1 & 0 & 0 \\ 
0 & 0 & 0 & 1 & 0 \\ 
1 & 1 & 1 & 1 & -1
\end{pmatrix} \, .
\end{equation*}

Any octuple of spheres is described by a matrix $\textbf{F}$, which explicitly gives the $abbc$ coordinates of four of the spheres in the octuples. Using the definition of the last row of $\textbf{F}$, the $abbc$ coordinates of the other spheres can be retrieved by looking at the products $\textbf{A}_i \cdot \textbf{F}$, $1 \leq i \leq 4$, where
\begin{align*}
\textbf{A}_1 = \begin{pmatrix}
-1 & 0 & 0 & 0 & 2 \\ 
0 & 1 & 0 & 0 & 0 \\ 
0 & 0 & 1 & 0 & 0 \\ 
0 & 0 & 0 & 1 & 0 \\ 
0 & 0 & 0 & 0 & 1
\end{pmatrix},  \quad \textbf{A}_2 = \begin{pmatrix}
1 & 0 & 0 & 0 & 0 \\ 
0 & -1 & 0 & 0 & 2 \\ 
0 & 0 & 1 & 0 & 0 \\ 
0 & 0 & 0 & 1 & 0 \\ 
0 & 0 & 0 & 0 & 1
\end{pmatrix}, \\ \textbf{A}_3 = \begin{pmatrix}
1 & 0 & 0 & 0 & 0 \\ 
0 & 1 & 0 & 0 & 0 \\ 
0 & 0 & -1 & 0 & 2 \\ 
0 & 0 & 0 & 1 & 0 \\ 
0 & 0 & 0 & 0 & 1
\end{pmatrix}, \quad \textbf{A}_4 = \begin{pmatrix}
1 & 0 & 0 & 0 & 0 \\ 
0 & 1 & 0 & 0 & 0 \\ 
0 & 0 & 1 & 0 & 0 \\ 
0 & 0 & 0 & -1 & 2 \\ 
0 & 0 & 0 & 0 & 1
\end{pmatrix} \, .
\end{align*}

Repeating the process of inscribing spheres in the gaps results in a generalized Apollonian sphere packings. Therefore, the set of $abbc$ coordinates of spheres in the packing is exactly the set of the first four coordinates of matrices in the orbit $\mathcal{A} \cdot \textbf{F}$, where $\mathcal{A}$ is the group
\begin{equation*}
\mathcal{A} = \langle \textbf{A}_1,\textbf{A}_2,\textbf{A}_3,\textbf{A}_4,\textbf{A}_5 \rangle \, .
\end{equation*}

\begin{remark}
The packing can be constructed from any octuple it contains.
\end{remark}

\begin{lemma} \label{inte}
If the curvatures of any octuple in a generalized Apollonian sphere packing are integers (or, equivalently, the second column of a matrix $\textbf{F}$ associated to this octuple has integer coordinates), then the same holds for all the spheres in the packing. Such a packing is called an integral generalized Apollonian sphere packing.
\\

If the curvatures of any octuple in an integral generalized Apollonian sphere packing are coprime (or, equivalently, the second column of a matrix $\textbf{F}$ associated to this octuple has coprime coordinates), then the same holds for any octuple in the packing. Such a packing is called a primitive integral generalized Apollonian sphere packing.
\end{lemma}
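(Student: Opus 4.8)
The plan is to show that integrality and the gcd of curvatures are preserved by the generators of the group $\mathcal{A}$, which suffices because every octuple in the packing is obtained from a fixed starting octuple by applying a product of the matrices $\textbf{A}_1,\dots,\textbf{A}_5$, and the curvatures of any octuple are exactly the second-column entries of the corresponding $\textbf{F}$ matrices (recall that the second column of $\textbf{a}(\mathcal{S})=(\overline{b},b,bx,by,bz)$ is the curvature $b$).

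For the first (integrality) claim, I would argue that it is enough to track the second column of $\textbf{F}$ under left multiplication by each generator. The key observation is that each $\textbf{A}_i$ has integer entries, so $\textbf{A}_i \cdot \textbf{F}$ has an integer second column whenever $\textbf{F}$ does. Concretely, if the four curvatures of the given octuple (the second-column entries of the first four rows of $\textbf{F}$, together with the fifth entry $\omega$) are integers, then applying any generator produces new second-column entries that are integer linear combinations of the old ones. The only subtlety is the fifth row: since the fifth row of $\textbf{F}$ is the average $\textbf{w}=\frac{\textbf{a}(\mathcal{S})+\textbf{a}(\mathcal{S}')}{2}$, one must check that its second coordinate $\omega$ is itself an integer whenever the four curvatures $b_1,b_2,b_3,b_4$ are. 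This follows from Theorem \ref{eq}: equation (\ref{equa}) gives $2\omega = b_1+b_2+b_3+b_4 \pm \sqrt{(b_1+b_2+b_3+b_4)^2 - 2(b_1^2+b_2^2+b_3^2+b_4^2)}$, and the relation $\omega+\omega' = b_1+b_2+b_3+b_4$ together with the fact that the inscribed curvatures $2\omega-b_j$ must also be curvatures of tangent spheres forces $\omega$ to be an integer; more directly, the inscribed sphere of the neighboring octuple has curvature $2\omega-b_j$, so if \emph{that} is to be an integer and $b_j$ is, then $2\omega\in\mathbb{Z}$, and a parity/Descartes-type argument shows $\omega\in\mathbb{Z}$. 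With $\omega\in\mathbb{Z}$ established for the starting octuple, integrality of the whole second column of $\textbf{F}$ propagates under each $\textbf{A}_i$.

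For the second (primitivity) claim, I would show that $\gcd$ of the second column is invariant under each generator. Since each $\textbf{A}_i$ is an integer matrix with $\det \textbf{A}_i = \pm 1$ (each is unipotent up to a single $-1$ on the diagonal, hence invertible over $\mathbb{Z}$), both $\textbf{A}_i$ and $\textbf{A}_i^{-1}$ preserve the lattice $\mathbb{Z}^5$ and therefore send a column with gcd $d$ to a column with the same gcd $d$: any common divisor of the entries of $\textbf{A}_i\textbf{v}$ divides the entries of $\textbf{A}_i^{-1}(\textbf{A}_i\textbf{v})=\textbf{v}$, and conversely. Applying this to the second column of $\textbf{F}$ shows that if it has coprime coordinates for one octuple, the coprimality persists throughout the orbit.

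The main obstacle is the fifth-row parity issue in the integrality step: one must verify that $\omega$ (the second coordinate of the average row) is an integer and not merely a half-integer. The cleanest route is to note that $2\omega - b_j$ is the curvature of an actual inscribed sphere in the adjacent octuple, and to invoke the integrality of \emph{all} curvatures in a single octuple as the hypothesis, after which the relation $\omega = \tfrac{1}{2}\big((2\omega-b_j)+b_j\big)$ combined with the equal-pair-sum identity $\textbf{a}(\mathcal{S}_j)+\textbf{a}(\mathcal{S}_j')$ being independent of $j$ pins down $2\omega$ as an integer, and the quadratic (\ref{equa}) rules out the half-integer case by a discriminant congruence.
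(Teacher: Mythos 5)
Your overall strategy is the same as the paper's: translate both properties into statements about the second column $(b_1,b_2,b_3,b_4,\omega)$ of $\textbf{F}$ and propagate them through the orbit using the fact that the generators of $\mathcal{A}$ are integral and integrally invertible. For the integrality claim you correctly isolate the one delicate point, namely that $\omega$ could a priori be a proper half-integer, and your resolution ($2\omega=b_j+b_j'\in\mathbb{Z}$, then equation (\ref{equa}) forces $2\omega^2\in\mathbb{Z}$, which is impossible for $\omega=k/2$ with $k$ odd) is exactly the paper's argument. One small slip: the sphere of curvature $2\omega-b_j$ is the partner $\mathcal{S}_j'$ inside the \emph{same} octuple, not a sphere of a neighbouring octuple, so $2\omega\in\mathbb{Z}$ follows directly from the hypothesis that all eight curvatures of the given octuple are integers, with no detour through adjacent octuples.

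The primitivity part has a genuine gap. You prove that the gcd of the second column is preserved by each generator (correct, and the determinant-$\pm1$ argument is a clean way to say it), but the lemma is about the gcd of the eight curvatures of an octuple, which equals $\gcd(b_1,\dots,b_4,2\omega)$ since the partner curvatures are $2\omega-b_j$, whereas the second column carries $\omega$, not $2\omega$. The implication $\gcd(b_1,\dots,b_4,2\omega)=1\Rightarrow\gcd(b_1,\dots,b_4,\omega)=1$ is trivial, so your chain takes you from one primitive octuple to \enquote{every second column is primitive}; to conclude that every \emph{octuple} is primitive you need the converse, namely that $\gcd(b_1,\dots,b_4,\omega)=1$ forces $\gcd(b_1,\dots,b_4,2\omega)$ to be $1$ rather than $2$. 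This is not automatic: one must rule out the case where all the $b_i$ are even while $\omega$ is odd, and this again uses equation (\ref{equa}) --- if every $b_i$ is even, reducing $2\omega^2-2\omega(b_1+b_2+b_3+b_4)+b_1^2+b_2^2+b_3^2+b_4^2=0$ modulo $4$ gives $2\omega^2\equiv 0 \bmod 4$, so $\omega$ is even, contradicting the primitivity of the column. The paper devotes half of its proof of this lemma to precisely that equivalence, and your write-up needs the corresponding step.
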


\begin{proof}
Let $(b_0,b_1,b_2,b_3,b_4,b'_0,b'_1,b'_2,b'_3,b'_4)$ be any integral octuple in the packing and $\omega$ the average of the curvatures in each pair. We have $2\omega \in \mathbb{Z}$. Now, from (\ref{eq}), we also have that $2 \omega^2 \in \mathbb{Z}$. Therefore, $\omega \in \mathbb{Z}$. Reciprocally, if $\omega \in \mathbb{Z}$ and $b_0,b_1,b_2,b_3,b_4$ are all integers, the same holds for all the spheres in the octuple.
\\
\\
Hence, the integrality of an octuple of curvatures is the same as the integrality of the second column of the matrix associated to this octuple.
\\

The group $\mathcal{A}$ is generated by matrices with integer coefficients. Therefore, if the second column of a matrix $\textbf{F}$ associated to some octuple in the packing has integer coordinates, the same will be true for the second column of the matrix associated to any octuple in the packing, since we just have to look at matrices in the orbit $\mathcal{A} \cdot \textbf{F}$.
\\
\\
Suppose that the octuple $(b_0,b_1,b_2,b_3,b_4,b'_0,b'_1,b'_2,b'_3,b'_4)$ is such that
\begin{equation*}
gcd(b_0,b_1,b_2,b_3,b_4,b'_0,b'_1,b'_2,b'_3,b'_4)=1 \, .
\end{equation*}
We have $gcd(b_1,b_2,b_3,b_4,b'_1,b'_2,b'_3,b'_4) =gcd(b_1,b_2,b_3,b_4,2\omega)=1$. Therefore, $gcd(b_1,b_2,b_3,b_4,\omega)=1$.
\\
\\
Reciprocally,
\begin{equation*}
gcd(b_1,b_2,b_3,b_4,\omega)=1 \Longrightarrow gcd(b_1,b_2,b_3,b_4,2\omega)=1 \text{ or } 2 \, .
\end{equation*}
But it cannot be $2$, since from (\ref{eq}) we would have $2| \omega$ and then $gcd(b_1,b_2,b_3,b_4,\omega)=2$. Hence, $gcd(b_0,b_1,b_2,b_3,b_4,b'_0,b'_1,b'_2,b'_3,b'_4)=gcd(b_1,b_2,b_3,b_4,2\omega)=1$.
\\

Now, one can easily see that the multiplication by any matrix of $\mathcal{A}$ does not change the $gcd$ of $(b_1,b_2,b_3,b_4,\omega)$.
\end{proof}

\begin{lemma} \label{parit}
Let $b_1,b_2,b_3,b_4$ be the curvatures of four mutually tangent spheres in a primitive octuple. Then, amongst $b_1,b_2,b_3,b_4$, there are two even numbers and two odd numbers. Furthermore, the two odd numbers are congruent modulo $4$.
\end{lemma}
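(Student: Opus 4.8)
The plan is to extract everything from the single Diophantine relation (\ref{equa}) together with the primitivity hypothesis, reading it modulo increasing powers of $2$. Write $S = b_1+b_2+b_3+b_4$ and $Q = b_1^2+b_2^2+b_3^2+b_4^2$, so that (\ref{equa}) becomes $2\omega^2 - 2\omega S + Q = 0$, i.e. $Q = 2\omega(S-\omega)$. The first observation is that $Q$ is therefore even, and since a sum of four squares is even exactly when an even number of the terms are odd, the count of odd curvatures among $b_1,b_2,b_3,b_4$ is $0$, $2$ or $4$. Dividing (\ref{equa}) by $2$ gives the monic relation $\omega^2 - S\omega + Q/2 = 0$, whose two integer roots are $\omega$ and $\omega' := S-\omega$ (both integral, $\omega$ by Lemma \ref{inte} and $\omega'$ as a difference of integers), with $\omega+\omega' = S$ and $\omega\omega' = Q/2$. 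These two symmetric functions are the main levers.

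First I would eliminate the two extreme cases. If all four $b_i$ were even, then $Q \equiv 0 \pmod 4$ and $S$ is even, so reducing $\omega^2 - S\omega + Q/2 = 0$ modulo $2$ forces $\omega^2 \equiv 0$, hence $\omega$ even; this makes $2 \mid \gcd(b_1,b_2,b_3,b_4,\omega)$, contradicting primitivity (Lemma \ref{inte}). If all four $b_i$ were odd, then each square is $\equiv 1 \pmod 8$, so $Q \equiv 4 \pmod 8$ and $Q/2 \equiv 2 \pmod 4$; a product $\omega\omega'$ that is $\equiv 2 \pmod 4$ forces exactly one of $\omega,\omega'$ to be odd, whence $S = \omega+\omega'$ is odd, which is impossible since a sum of four odd numbers is even. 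Thus exactly two of the curvatures are odd and two are even; relabel so that $b_1,b_2$ are odd and $b_3,b_4$ are even. Note that now $Q/2 = \omega\omega'$ is odd, so both $\omega$ and $\omega'$ are odd, a fact I will use next.

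It remains to show $b_1 \equiv b_2 \pmod 4$, and this is where the argument is most delicate: modulo $8$ the two odd squares contribute $b_1^2+b_2^2\equiv 2$ regardless of whether $b_1\equiv b_2$, so the congruence is invisible at that level and one must track $S$ as well. The key computation is to reduce $\omega^2 - S\omega + Q/2 = 0$ modulo $4$: using that $\omega$ is odd (so $\omega^2\equiv 1$, and $S\omega \equiv S \pmod 4$ because $S$ is even) yields $S \equiv 1 + Q/2 \pmod 4$. Now $Q/2 \bmod 4$ is governed by the even curvatures through $b_3^2+b_4^2$, which is $\equiv 0 \pmod 8$ iff $b_3\equiv b_4\pmod 4$ and $\equiv 4 \pmod 8$ otherwise; correspondingly $Q/2 \equiv 1$ or $3 \pmod 4$. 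In either sub-case, substituting the matching value of $b_3+b_4 \bmod 4$ into $S \equiv 1 + Q/2$ collapses to $b_1+b_2 \equiv 2 \pmod 4$. Finally I would invoke the elementary fact that, for odd integers, $b_1+b_2\equiv 2\pmod 4$ is equivalent to $b_1\equiv b_2\pmod 4$, which closes the proof. The main obstacle is precisely this last part: one has to climb to modulus $8$ to see the even squares and to exploit the parity of $\omega$ just established, since no coarser reduction detects the mod-$4$ coincidence of the two odd curvatures.
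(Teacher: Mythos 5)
Your proof is correct and follows essentially the same route as the paper: both arguments extract everything from relation (\ref{equa}) read modulo $2$, $4$ and $8$ together with primitivity of $(b_1,b_2,b_3,b_4,\omega)$. The only cosmetic differences are your use of Vieta's formulas $\omega+\omega'=S$, $\omega\omega'=Q/2$ to dispatch the all-odd case, and your direct derivation of $b_1+b_2\equiv 2\pmod 4$ where the paper instead assumes $b_1\not\equiv b_2\pmod 4$ and reaches the impossible congruence $(b_3+b_4)(b_3+b_4-2\omega)\equiv 4\pmod 8$.
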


\begin{proof}
First, we reduce equation (\ref{eq}) $\bmod{2}$. This yields
\begin{equation*}
b_1+b_2+b_3+b_4 = 0 \bmod{2} \, .
\end{equation*}
We have $3$ possibilities for the parities of the $b_1,b_2,b_3,b_4$
\begin{itemize}
\renewcommand{\labelitemi}{$\bullet$}
\item $b_1,b_2,b_3,b_4$ are all even,
\item $b_1,b_2,b_3,b_4$ are all odd,
\item there are $2$ even elements and $2$ odd elements amongst $b_1,b_2,b_3,b_4$.
\end{itemize}
$b_1,b_2,b_3,b_4$ cannot all be even by primitivity, since we would have $gcd(b_1,b_2,b_3,b_4,2\omega)=2$, which is impossible.
\\
\\
Suppose that $b_1,b_2,b_3,b_4$ are all odd. Looking at equation (\ref{eq}) $\bmod{4}$ gives $2\omega^2 = 0 \bmod{4}$, thus $\omega$ is even. Now, $\bmod{8}$, the same equation gives $b_1^2 + b_2^2 + b_3^2 +b_4^2 = 0 \bmod{8}$, but, since $b_1,b_2,b_3,b_4$ are all odd, $b_1^2 + b_2^2 + b_3^2 +b_4^2=4 \bmod{8}$.
\\
\\
Therefore, there are $2$ even elements and $2$ odd elements amongst $b_1,b_2,b_3,b_4$. Looking at the equation (\ref{eq}) $\bmod{4}$, we deduce that $\omega$ is odd. Suppose, without loss of generality, that $b_1,b_2$ are the two odd elements and that they are not congruent modulo $4$. Looking at equation (\ref{eq}) $\bmod{8}$ yields
\begin{equation*}
-2\omega(b_3+b_4)+(b_3+b_4)^2 \equiv 4 \bmod{8} \Rightarrow (b_3+b_4)(b_3+b_4-2\omega) \equiv 4 \bmod{8} 
\end{equation*}
But, since $\omega$ is odd, $b_3+b_4-2\omega \equiv b_3+b_4 -2 \bmod{4}$, which makes it impossible for $(b_3+b_4)(b_3+b_4-2\omega)$ to be $4 \bmod{8} $.
\end{proof}

\begin{lemma} \label{odd}
In any primitive integral packing, all the odd curvatures are congruent modulo $4$.
\end{lemma}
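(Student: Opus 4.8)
The plan is to exploit the fact, established earlier, that every curvature in the packing occurs among the first four entries of a vector in the orbit $\mathcal{A} \cdot \mathbf{v}$, where $\mathbf{v} = (b_1,b_2,b_3,b_4,\omega)^t$ is the second column of a fixed matrix $\mathbf{F}$ associated to some octuple (recall that the second column of $g\mathbf{F}$ is $g\mathbf{v}$). Since the packing is primitive, this base octuple is a primitive octuple, so by Lemma \ref{parit} I may fix once and for all the common residue $c \in \{1,3\}$ modulo $4$ of the two odd curvatures among $b_1,b_2,b_3,b_4$. The strategy is to reduce everything modulo $4$ and to prove, by induction on the length of a word in the generators $\mathbf{A}_1,\dots,\mathbf{A}_5$, that every vector $\mathbf{u} = (u_1,u_2,u_3,u_4,u_5)^t$ in the orbit satisfies the invariant $P(\mathbf{u})$: the entry $u_5$ is odd, exactly two of $u_1,u_2,u_3,u_4$ are odd, and those two odd entries are $\equiv c \pmod 4$. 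Since every curvature in the packing is one of the first four entries of such a $\mathbf{u}$, this immediately yields the lemma.

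For the base case, $\mathbf{v}$ satisfies $P$ by Lemma \ref{parit} (two odd, two even, the odd ones $\equiv c$) together with the fact, noted in its proof, that $\omega$ is odd. For the inductive step I would check that each generator, acting on the left of a vector $\mathbf{u}$ with $P(\mathbf{u})$, preserves $P$. The generators $\mathbf{A}_1,\dots,\mathbf{A}_4$ send $u_i \mapsto 2u_5 - u_i$ and fix all other coordinates, so $u_5$ stays odd and the parities of $u_1,\dots,u_4$ are unchanged (as $2u_5$ is even). The one point that needs the hypothesis is the residue of an odd entry: if $u_i$ and $u_5$ are both odd, then $2u_5 - u_i \equiv 2 - u_i \equiv u_i \pmod 4$, because $2 - 1 \equiv 1$ and $2 - 3 \equiv 3$; hence the class $c$ is preserved. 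The generator $\mathbf{A}_5$ fixes $u_1,\dots,u_4$ and replaces $u_5$ by $u_1+u_2+u_3+u_4-u_5$; since exactly two of the $u_i$ are odd their sum is even, so the new fifth entry is again odd, and $P$ is preserved.

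Thus $P$ propagates through the whole orbit, proving that all odd curvatures are $\equiv c \pmod 4$. The only genuinely delicate ingredient is the congruence $2u_5 - u_i \equiv u_i \pmod 4$ for odd $u_i, u_5$, which is exactly what forces odd curvatures to retain their residue under the reflection moves; keeping track of the oddness of $u_5$ along the way, so that this congruence always applies, is the remaining bookkeeping, and both facts rest ultimately on Lemma \ref{parit}. I would also record that the parity pattern ``two odd, two even'' of $u_1,\dots,u_4$ is itself an orbit invariant (the $\mathbf{A}_i$ preserve each parity and $\mathbf{A}_5$ leaves these entries fixed), which is what guarantees that there are always exactly two odd entries to control and that Lemma \ref{parit} keeps applying throughout the packing.
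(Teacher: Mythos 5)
Your proof is correct and takes essentially the same route as the paper: it propagates the mod-$4$ information from Lemma \ref{parit} through the orbit $\mathcal{A}\cdot v$ by checking each generator, the key points being that $u_5$ stays odd and that $2u_5-u_i\equiv u_i\pmod 4$ for odd $u_i$. You merely make explicit the bookkeeping (the ``two odd, two even'' parity invariant and the persistence of the oddness of $\omega$) that the paper compresses into ``one can easily check''.
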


\begin{proof}
From the previous lemma, the two odd curvatures of our starting octuple are congruent modulo $4$. Now, one can easily check that, since $\omega$ is odd, none of the matrices of $\mathcal{A}$ changes the residue modulo $4$ of an odd element. 
\end{proof}

Similarly to the classical Apollonian circle or sphere packing case, or to the generalized Apollonian circle packing case, we can define the notion of root octuple, which is the \enquote{minimal} octuple in the packing, in the sense that it describes the biggest spheres in the packing (it is the octuple with the smallest $\omega$).
\begin{definition}
Let $v^t=(a,b,c,d,\omega)$ be an octuple in the packing $\mathcal{P}$. $v^t$ is said to be a root octuple if $a \leq 0 \leq b \leq c \leq d \leq \omega$ and $\omega \leq a+b+c+d$.
\end{definition}

\begin{remark}
Analogously to the classical Apollonian circle or sphere packing case, or to the generalized Apollonian circle packing case, the root octuple is unique. Like in these kinds of packings, there is a reduction algorithm that allows us, from any octuple in $\mathcal{P}$, to find the root octuple. 
\end{remark}

\section{The local-global principle}

In this section, we will use a method of Sarnark \cite{Sarnak} to prove the local-global principle for integral generalized Apollonian sphere packings. We are only interested in the curvatures and no longer need to use the full matrix $\textbf{F}$ associated to an octuple. We will only focus on the second column of this matrix, in which the first four coordinates are the curvatures of four spheres (one in each pair) in the octuple  and the fifth coordinate is the average of the curvatures in each pair.
\\

$\mathcal{P}$ will always denote a primitive integral generalized Apollonian sphere packing. Let $v_{\mathcal{P}}^t=(a_0,b_0,c_0,d_0,\omega_0)^t$ be the root octuple of the $\mathcal{P}$. Using Lemma \ref{parit}, two elements amongst $a_0,b_0,c_0,d_0$ are even and two are odd. Relabeling $a_0,b_0,c_0$ or $d_0$, we can assume that $a_0$ is even and $b_0$ is odd. If $a_0 = 0$, we can replace $a_0$ by $2\omega - a_0$ to make it non-zero. Notice that $a_0 + b_0$ will be odd and positive, since if $a_0 < 0$, we have $b_0 > \vert a_0 \vert$ (because in this case the sphere of curvature $b_0$ is inside the one of curvature $a_0$). We will again call this vector $v_{\mathcal{P}}^t$. Notice that such a $v_{\mathcal{P}}^t$ might no longer be a root quadruple.

\begin{theorem} \label{curva}
Let $v_{\mathcal{P}}^t=(a_0,b_0,c_0,d_0,\omega_0)^t$ as above. Then, the set of curvatures of spheres in $\mathcal{P}$ contains the set of integers of the form
\begin{equation*}
f_{a_0}(\alpha_1,\alpha_2,\beta_1,\beta_2) - a_0 \quad \text{with} \quad \text{gcd}_{\mathbb{Z}[i]}(\alpha_1 + i \alpha_2,\beta_1 + i\beta_2) = 1
\end{equation*}
where $f_{a_0}$ is the following positive definite integral quaternary quadratic form
\begin{equation*}
f_{a_0}(x,y,z,t)=A_0 x^2 + A_0 y^2 + 4D_0 z^2 + 4D_0 t^2 + 4xt B_0 - 4 yz B_0 + 4xz C_0 + 4yt C_0 
\end{equation*}
\begin{align*}
A_0 = a_0 + b_0, \quad
B_0 = -\frac{a_0 + b_0 + c_0 + d_0 - 2\omega_0}{2}, \\
C_0 = -\frac{a_0 + b_0 + c_0 - d_0}{2}, \quad
D_0 = a_0 + c_0 \, .
\end{align*}
\end{theorem}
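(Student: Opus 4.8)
The plan is to realize the curvatures as values of a quadratic form through the spinor correspondence, following the strategy of Sarnak \cite{Sarnak} and its sphere analogue in \cite{Konto2}. The first step is to record the quadratic form governing octuples. By Theorem \ref{eq}, every octuple vector $v=(a,b,c,d,\omega)^t$ lies on the quadric $Q(v)=0$, where
\begin{equation*}
Q(a,b,c,d,\omega)=a^2+b^2+c^2+d^2-2\omega(a+b+c+d)+2\omega^2=\sum_{j=1}^4(x_j-\omega)^2-2\omega^2 .
\end{equation*}
Completing the square exhibits $Q$ as $y_1^2+y_2^2+y_3^2+y_4^2-2y_5^2$, so it has signature $(4,1)$, and a direct check gives $\textbf{A}_i^t \textbf{M} \textbf{A}_i=\textbf{M}$ for the symmetric matrix $\textbf{M}$ of $Q$ and each generator $\textbf{A}_i$; hence $\mathcal{A}\subset O_Q(\mathbb{Z})$, a subgroup of the integral Lorentz group $O(4,1)$.

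Next I would fix the sphere of curvature $a_0$ and pass to its stabilizer. The generators $\textbf{A}_2,\textbf{A}_3,\textbf{A}_4,\textbf{A}_5$ all fix the first coordinate and generate a subgroup $\mathcal{A}_{a_0}$ preserving both the hyperplane $a=a_0$ and the quadric. On the orthogonal complement of the first basis vector, $Q$ restricts to a form of signature $(3,1)$, so $\mathcal{A}_{a_0}$ maps into $O(3,1)(\mathbb{Z})$; geometrically this is the statement that the spheres of $\mathcal{P}$ tangent to $\mathcal{S}_{a_0}$ lie on the boundary $\partial\mathbb{H}^3=\mathbb{C}\cup\{\infty\}$ and form a two-dimensional Apollonian-type configuration on $\mathcal{S}_{a_0}$.

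The heart of the argument is the exceptional isomorphism $\mathrm{Spin}(3,1)\cong SL_2(\mathbb{C})$, whose arithmetic incarnation here is $SL_2(\mathbb{Z}[i])$, because the integral structure on $\partial\mathbb{H}^3=\mathbb{C}$ is exactly the Gaussian integers. I would exhibit the double cover $SL_2(\mathbb{Z}[i])\to O(3,1)(\mathbb{Z})$ under which a column vector $(\alpha,\beta)^t$, with $\alpha=\alpha_1+i\alpha_2$ and $\beta=\beta_1+i\beta_2$, parametrizes the null line of a tangent sphere. Writing the coordinates of the corresponding point of the null cone as Hermitian (hence real-quadratic) expressions in $(\alpha_1,\alpha_2,\beta_1,\beta_2)$ and reading off the curvature coordinate produces a positive definite quaternary form, positive definiteness coming from the $(3,1)$ signature restricted to the relevant direction. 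Evaluating the four coefficients on the base octuple $v_{\mathcal{P}}$ yields $A_0,B_0,C_0,D_0$ as stated, and the constant contribution gives the curvature as $f_{a_0}(\alpha_1,\alpha_2,\beta_1,\beta_2)-a_0$. This is where the bulk of the routine but lengthy computation lives.

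Finally, the coprimality condition and the coverage of all the stated values both come from the arithmetic of $\mathbb{Z}[i]$. Since $\mathbb{Z}[i]$ is Euclidean, a Gaussian vector $(\alpha,\beta)$ is the first column of a matrix in $SL_2(\mathbb{Z}[i])$ if and only if $\gcd_{\mathbb{Z}[i]}(\alpha,\beta)=1$; applying the image of such a matrix to $v_{\mathcal{P}}$ realizes $f_{a_0}(\alpha_1,\alpha_2,\beta_1,\beta_2)-a_0$ as a genuine curvature in $\mathcal{P}$, and the primitivity of a column of an $SL_2(\mathbb{Z}[i])$ matrix forces exactly the condition $\gcd_{\mathbb{Z}[i]}(\alpha,\beta)=1$. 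I expect the main obstacle to be precisely at this step: I must verify that the stabilizer $\mathcal{A}_{a_0}$ coming from the Apollonian generators is large enough inside $SL_2(\mathbb{Z}[i])$ to act transitively on \emph{all} primitive Gaussian vectors, so that no congruence obstruction survives and the full set of form values is covered, rather than only those produced by a proper finite-index subgroup. Establishing this transitivity, and not the form computation, is the delicate part of the proof.
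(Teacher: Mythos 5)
Your skeleton matches the paper's: fix the sphere of curvature $a_0$, restrict to the stabilizer $\mathcal{A}_1=\langle \textbf{A}_2,\textbf{A}_3,\textbf{A}_4,\textbf{A}_5\rangle$ of the first coordinate, change variables so that the octuple equation becomes a signature-$(3,1)$ form ($B^2+C^2-AD=-a_0^2$ in the paper), lift through the spin morphism $\rho\colon PSL_2(\mathbb{C})\to SO_{\Delta}(\mathbb{R})$, and read the curvature off the top row of $\rho$ as a quaternary quadratic form in $(\alpha_1,\alpha_2,\beta_1,\beta_2)$, with coprimality coming from primitivity of a row of a determinant-one matrix. All of that is exactly what the paper does.

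The gap is in your final step, which you yourself flag as the delicate one: you propose to show that the lifted stabilizer is large enough in $SL_2(\mathbb{Z}[i])$ to act transitively on \emph{all} primitive Gaussian vectors, ``so that no congruence obstruction survives.'' That is not the right target, and the route to it is blocked. First, the lifts of the six generators $\textbf{g}_i\textbf{g}_j$ of $G\cap SO_{\Delta}(\mathbb{Z})$ are not all in $PSL_2(\mathbb{Z}[i])$: three of them have entries involving $\sqrt{2}/2$, so the lifted group is not a priori a subgroup of the Picard group at all, and one must hunt for explicit words in the generators that land in $PSL_2(\mathbb{Z}[i])$. Second, what the paper actually extracts by such a brute-force word search is only the congruence subgroup $\Xi=\Gamma(2)\cup\mathrm{diag}(i,-i)\Gamma(2)$, identified via Fine's theorem that $\Gamma(2)$ is the normal closure of $\left(\begin{smallmatrix}1&2\\0&1\end{smallmatrix}\right)$ and $\left(\begin{smallmatrix}1&2i\\0&1\end{smallmatrix}\right)$ in $PSL_2(\mathbb{Z}[i])$. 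A level-$2$ congruence restriction therefore \emph{does} survive: the top rows obtained are the primitive pairs $(\alpha,\beta)$ with $\alpha\equiv 1$ or $i$ and $\beta\equiv 0\bmod 2$. The statement of the theorem is calibrated to exactly this: the coefficient $4D_0$ on $z^2,t^2$ (versus $D_0$ on $|\beta|^2$ in the raw expression $|\alpha|^2A_0+2\Im(\beta\overline{\alpha})B_0+2\Re(\beta\overline{\alpha})C_0+|\beta|^2D_0$) is the substitution $\beta=2(z+it)$, and the residual condition $\alpha\equiv 1$ or $i\bmod 2$ is discharged by observing that $A_0=a_0+b_0$ is odd, so every odd value of $f_{a_0}$ with coprime arguments satisfies it automatically. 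Without replacing ``prove full transitivity'' by ``exhibit a specific congruence subgroup inside the lifted group and build its level into the form,'' your argument does not close.
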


\begin{proof}
As seen before, the set of curvatures of spheres in the packing is exactly the set of the first four coordinates of vectors in the orbit $\mathcal{A} \cdot v_{\mathcal{P}}^t$.
\\
\\
We will focus on the smaller orbit  $\mathcal{A}_1 \cdot v_{\mathcal{P}}^t$, where
\begin{equation*}
\mathcal{A}_1 = \langle \textbf{A}_2,\textbf{A}_3,\textbf{A}_4,\textbf{A}_5 \rangle \, .
\end{equation*}
This subgroup of $\mathcal{A}$ leaves the first coordinate of any vector invariant. Using Theorem \ref{eq}, we know that, for any vector $(a_0,b,c,d,\omega)$ in this orbit,
\begin{equation*}
2\omega^2 - 2 \omega (a_0+b+c+d) + a_0^2+b^2+c^2+d^2=0 \, .
\end{equation*}

The change of variables $(x_2,x_3,x_4,x_5) = (b + a_0, c + a_0, d + a_0,\omega + a_0)$ allows us to rewrite the equation as
\begin{equation} \label{eqmod}
Q(x_2,x_3,x_4,x_5)=2x_5^2 - 2 x_5(x_2 + x_3 + x_4) + x_2^2 + x_3^2 + x_4^2 = -2 a_0^2
\end{equation}
and, in the context of the orbit, is equivalent to conjugating the group $\mathcal{A}_1$ to $\mathcal{A}_1' = U^{-1} \mathcal{A}_1 U$  where
\begin{equation*}
U=\begin{pmatrix}
1 & 0 & 0 & 0 & 0 \\ 
-1 & 1 & 0 & 0 & 0 \\ 
-1 & 0 & 1 & 0 & 0 \\ 
-1 & 0 & 0 & 1 & 0 \\
-1 & 0 & 0 & 0 & 1 \\
\end{pmatrix} \, .
\end{equation*}
$\mathcal{A}_1'$ is isomorphic to $\Gamma = \langle \textbf{M}_2,\textbf{M}_3,\textbf{M}_4,\textbf{M}_5 \rangle$, where
\begin{align*}
\textbf{M}_2=\begin{pmatrix}
-1 & 0 & 0 & 2 \\ 
0 & 1 & 0 & 0 \\ 
0 & 0 & 1 & 0 \\
0 & 0 & 0 & 1
\end{pmatrix} \, , \quad&
\textbf{M}_3=\begin{pmatrix}
1 & 0 & 0 & 0 \\ 
0 & -1 & 0 & 2 \\ 
0 & 0 & 1 & 0 \\
0 & 0 & 0 & 1
\end{pmatrix} \, , \\
\textbf{M}_4=\begin{pmatrix}
1 & 0 & 0 & 0 \\ 
0 & 1 & 0 & 0 \\ 
0 & 0 & -1 & 2 \\
0 & 0 & 0 & 1
\end{pmatrix} \, , \quad&
\textbf{M}_5=\begin{pmatrix}
1 & 0 & 0 & 0 \\ 
0 & 1 & 0 & 0 \\ 
0 & 0 & 1 & 0 \\
1 & 1 & 1 & -1
\end{pmatrix} \, .
\end{align*}
The action of $\mathcal{A}_1$ on $v_0^t$ can be understood by studying the action of $\Gamma$ on $u_0^t=(x_2^0,x_3^0,x_4^0,x_5^0)^t = (b_0 + a_0,c_0 + a_0, d_0 + a_0,\omega_0 + a_0)^t$. We also have that $\Gamma \leq O_{Q}(\mathbb{Z})$.
\\
\\
We make another change of variables, $(x_2,x_3,x_4,x_5)=(A,D,A+2C+D,A+B+C+D)$. This allows us to rewrite the equation as
\begin{equation*}
\Delta(A,B,C,D)=B^2 + C^2 - AD = -a_0^2 \, .
\end{equation*}
In the context of the orbit, it is equivalent to conjugating the group $\Gamma$ to $G=V^{-1} \Gamma V$  where
\begin{equation*}
V=\begin{pmatrix}
1 & 0 & 0 & 0 \\ 
0 & 0 & 0 & 1 \\ 
1 & 0 & 2 & 1 \\ 
1 & 1 & 1 & 1
\end{pmatrix} \, .
\end{equation*}
Then, $G = \langle \textbf{g}_2,\textbf{g}_3,\textbf{g}_4,\textbf{g}_5 \rangle$ where
\begin{align*}
\textbf{g}_2 = \begin{pmatrix}
1 & 2 & 2 & 2 \\ 
0 & 0 & -1 & -1 \\ 
0 & -1 & 0 & -1 \\ 
0 & 0 & 0 & 1
\end{pmatrix} \, , \quad&
\textbf{g}_3 = \begin{pmatrix}
1 & 0 & 0 & 0 \\ 
-1 & 0 & -1 & 0 \\ 
-1 & -1 & 0 & 0 \\ 
2 & 2 & 2 & 1
\end{pmatrix} \, ,
\\
\textbf{g}_4 = \begin{pmatrix}
1 & 0 & 0 & 0 \\ 
0 & 0 & 1 & 0 \\ 
0 & 1 & 0 & 0 \\ 
0 & 0 & 0 & 1
\end{pmatrix} \, , \quad&
\textbf{g}_5 = \begin{pmatrix}
1 & 0 & 0 & 0 \\ 
0 & -1 & 0 & 0 \\ 
0 & 0 & 1 & 0 \\ 
0 & 0 & 0 & 1
\end{pmatrix} \, ,
\end{align*}
and $G \leq O_{\Delta}(\mathbb{Z})$. Let $G'= G \cap SO_{\Delta}(\mathbb{Z}) = \langle \textbf{g}_2\textbf{g}_3,\textbf{g}_2\textbf{g}_4,\textbf{g}_2\textbf{g}_5,\textbf{g}_3\textbf{g}_4,\textbf{g}_3\textbf{g}_5,\textbf{g}_4\textbf{g}_5 \rangle$.
\\

Proceeding as in Chapter 13.9 of \cite{Cassels} or as in \cite{Konto2}, we have the morphism
\begin{equation}
\begin{array}{ccc}\rho:\textrm{PSL}_2(\mathbb{C}) &\longrightarrow & \textrm{SO}_{\Delta}(\mathbb{R})\\
{\left(\begin{array}{ll}\alpha&\beta\\ \gamma&\delta\\ \end{array}\right)}&{\longmapsto}&{\begin{pmatrix}
|\alpha|^2 & 2 \Im(\beta \overline{\alpha}) & 2 \Re(\beta \overline{\alpha}) & |\beta|^2 \\ 
\Im(\alpha \overline{\gamma}) & \Re(\overline{\alpha} \delta - \overline{\beta} \gamma) & \Im(\alpha \overline{\delta} + \beta \overline{\gamma}) & \Im(\beta \overline{\delta}) \\ 
\Re(\alpha \overline{\gamma}) & \Im( \overline{\alpha} \delta - \overline{\beta} \gamma) & \Re(\alpha \overline{\delta} + \beta \overline{\gamma}) & \Re(\beta \overline{\delta}) \\ 
|\gamma|^2 & 2 \Im(\delta \overline{\gamma}) & 2 \Re(\delta \overline{\gamma}) & |\delta|^2
\end{pmatrix} 
}\\
\end{array} \, .
\end{equation} 

Let
\begin{align*}
\textbf{M}_1=\begin{pmatrix}
1 & 1+i \\ 
-1+i & -1
\end{pmatrix} \, , \quad&
\textbf{M}_2=\begin{pmatrix}
i & -1+i \\ 
0 & -i
\end{pmatrix} \, ,
\\
\textbf{M}_3=\begin{pmatrix}
(-1+i)\frac{\sqrt{2}}{2} & i\sqrt{2} \\ 
0 & (-1-i)\frac{\sqrt{2}}{2}
\end{pmatrix} \, , \quad&
\textbf{M}_4=\begin{pmatrix}
i & 0 \\ 
-1-i & -i
\end{pmatrix} \, ,
\\
\textbf{M}_5=\begin{pmatrix}
(-1+i)\frac{\sqrt{2}}{2} & 0 \\ 
-i\sqrt{2} & (-1-i)\frac{\sqrt{2}}{2}
\end{pmatrix} \, , \quad&
\textbf{M}_6=\begin{pmatrix}
(1+i)\frac{\sqrt{2}}{2} & 0 \\ 
0 & (1-i)\frac{\sqrt{2}}{2}
\end{pmatrix} \, .
\end{align*}
Then, we have
\begin{align*}
\rho(\textbf{M}_1)=\textbf{g}_2\textbf{g}_3 \, , \quad \rho(\textbf{M}_2)=\textbf{g}_2\textbf{g}_4 \, , \\
\rho(\textbf{M}_3)=\textbf{g}_2\textbf{g}_5 \, , \quad \rho(\textbf{M}_4)=\textbf{g}_3\textbf{g}_4 \, , \\
\rho(\textbf{M}_5)=\textbf{g}_3\textbf{g}_5 \, , \quad \rho(\textbf{M}_6)=\textbf{g}_4\textbf{g}_5 \, .
\end{align*}
Let $\mathcal{M}=\langle \textbf{M}_1, \textbf{M}_2, \textbf{M}_3, \textbf{M}_4, \textbf{M}_5, \textbf{M}_6 \rangle$. A brute force search gives
\begin{align*}
\textbf{M}_6 \textbf{M}_4 \textbf{M}_2 \textbf{M}_4 \textbf{M}_6^{-1} \textbf{M}_2^{-1} &=\begin{pmatrix}
1 & 2 \\ 
0 & 1
\end{pmatrix} \, , \quad
\textbf{M}_5^{-1} \textbf{M}_4^{-1} \textbf{M}_6^{-1} \textbf{M}_5^{-1} \textbf{M}_6^{-1} \textbf{M}_4 =\begin{pmatrix}
1 & 0 \\ 
2 & 1
\end{pmatrix} \, , \\
\textbf{M}_4^{-1} \textbf{M}_6^{-1} \textbf{M}_5 &=\begin{pmatrix}
1 & 0 \\ 
2i & 1
\end{pmatrix} \, , \quad
\textbf{M}_6^{-1} \textbf{M}_1 \textbf{M}_5 \textbf{M}_6 \textbf{M}_5^{-1} \textbf{M}_7 =\begin{pmatrix}
1 & 2i \\ 
0 & 1
\end{pmatrix} \, , \\
\textbf{M}_2 \textbf{M}_1 \textbf{M}_5 \textbf{M}_4^{-1} \textbf{M}_5 &=\begin{pmatrix}
i & 0 \\ 
0 & -i
\end{pmatrix} \, , \quad
\textbf{M}_3^{-1} \textbf{M}_2 \textbf{M}_6^{-1} \textbf{M}_4^{-1} \textbf{M}_3^{-1} \textbf{M}_5^{-1} \textbf{M}_4^{-1} \textbf{M}_6^{-1} \textbf{M}_5=\begin{pmatrix}
1+2i & 2 \\ 
2 & 1-2i
\end{pmatrix} \, , \\
\textbf{M}_4^{-1} \textbf{M}_5 \textbf{M}_6 &\textbf{M}_1^{-1} \textbf{M}_3^{-1} \textbf{M}_4^{-1} \textbf{M}_6^{-1} \textbf{M}_1^{-1} \textbf{M}_2^{-1} \textbf{M}_4=\begin{pmatrix}
1-2i & 2i \\ 
-2i & 1+2i
\end{pmatrix} \, , \\
\textbf{M}_6^{-1} \textbf{M}_2^{-1} &\textbf{M}_6^{-1} \textbf{M}_4^{-1} \textbf{M}_5^{-1} \textbf{M}_6^{-1} \textbf{M}_2^{-1}=\begin{pmatrix}
1+2i & 2i \\ 
-2i & 1-2i
\end{pmatrix} \, .
\end{align*}
This set of matrices generates the following subgroup of the Picard group $PSL_2(\mathbb{Z}[i])$
\begin{equation*}
\Xi = \Gamma(2) \cup \begin{pmatrix}
i & 0 \\ 
0 & -i
\end{pmatrix} \Gamma(2)
\end{equation*}
where $\Gamma(2)$ is the principal congruence subgroup of $PSL_2(\mathbb{Z}[i])$
\begin{equation*}
\Gamma(2) = \left\lbrace \begin{pmatrix}
\alpha & \beta \\ 
\gamma & \delta
\end{pmatrix} \in PSL_2(\mathbb{Z}[i]) \text{ such that } \begin{pmatrix}
\alpha & \beta \\ 
\gamma & \delta
\end{pmatrix} \equiv \begin{pmatrix}
 1 & 0 \\ 
0 & 1
\end{pmatrix} \bmod{2} \right\rbrace \, .
\end{equation*}
More precisely, as proven in \cite{Fine}, $\Gamma(2)$ is the normal closure of $\langle \begin{pmatrix}
1 & 2 \\ 0 & 1
\end{pmatrix}, \begin{pmatrix}
1 & 2i \\ 0 & 1
\end{pmatrix} \rangle$ in $PSL_2(\mathbb{Z}[i])$. Since the generators of $PSL_2(\mathbb{Z}[i])$ are known, one can compute all the conjugates and show that the matrices above generate a normal subgroup of $PSL_2(\mathbb{Z}[i])$, which therefore contains $\Gamma(2)$, hence contains $\Xi$.
\\
\\
Therefore,
\begin{align*}
V \rho(\Xi) V^{-1} \subset \Gamma \Rightarrow V \rho(\Xi ) V^{-1} . u_0^t \subset \Gamma . u_0^t \, .
\end{align*}
Let
\begin{equation*}
\begin{pmatrix}
A_0 \\ B_0 \\ C_0 \\ D_0
\end{pmatrix} = V^{-1} . u_0^t = \begin{pmatrix}
a_0 + b_0 \\ -\frac{a_0 + b_0 + c_0 + d_0 - 2\omega_0}{2} \\ -\frac{a_0 + b_0 + c_0 - d_0}{2} \\ a_0 + c_0
\end{pmatrix} \, .
\end{equation*}
From Lemma \ref{parit}, we know that this vector has integer coordinates. Using the definition of $\rho$, we have that the set of vectors of the form
\begin{equation} \label{set}
V \begin{pmatrix}
|\alpha|^2 & 2 \Im(\beta \overline{\alpha}) & 2 \Re(\beta \overline{\alpha}) & |\beta|^2 \\ 
\Im(\alpha \overline{\gamma}) & \Re(\overline{\alpha} \delta - \overline{\beta} \gamma) & \Im(\alpha \overline{\delta} + \beta \overline{\gamma}) & \Im(\beta \overline{\delta}) \\ 
\Re(\alpha \overline{\gamma}) & \Im( \overline{\alpha} \delta - \overline{\beta} \gamma) & \Re(\alpha \overline{\delta} + \beta \overline{\gamma}) & \Re(\beta \overline{\delta}) \\ 
|\gamma|^2 & 2 \Im(\delta \overline{\gamma}) & 2 \Re(\delta \overline{\gamma}) & |\delta|^2
\end{pmatrix} \begin{pmatrix}
A_0 \\ B_0 \\ C_0 \\ D_0
\end{pmatrix}
\end{equation}
with
\begin{equation*}
\begin{pmatrix}
\alpha & \beta \\ 
\gamma & \delta
\end{pmatrix} \in \Xi 
\end{equation*}
forms an explicit subset of the orbit $\Gamma . u_0^t$.
\\
\\
Considering the change of variables we used, the integers of the form $x_2 - a_0$, $x_3 - a_0$, $x_4 - a_0$ and $x_5 - a_0$, when $(x_2,x_3,x_4,x_5)^t$ runs through the orbit $\Gamma . u_0^t$, appear in the second, third, fourth and fifth coordinates of vectors in the orbit $\mathcal{A}_1 . v_0^t$. 
\\
\\
In particular, from (\ref{set}), we have that the integers of the form
\begin{equation*}
|\alpha|^2 A_0 + 2\Im(\beta \overline{\alpha})B_0 + 2\Re(\beta \overline{\alpha})C_0 + |\beta|^2 D_0 - a_0
\end{equation*}
with
\begin{equation*}
\begin{pmatrix}
\alpha & \beta \\ 
\gamma & \delta
\end{pmatrix} \in \Xi 
\end{equation*}
appear in the second coordinate of vectors in the orbit $\mathcal{A}_1 . v_0^t$. This means that the set of integers of the form
\begin{equation*}
A_0 \alpha_1^2 + A_0 \alpha_2^2 + D_0 \beta_1^2 + D_0 \beta_2^2 + 2\alpha_1 \beta_2 B_0 - 2\alpha_2 \beta_1 B_0 + 2\alpha_1 \beta_1 C_0 + 2 \alpha_2 \beta_2 C_0 - a_0 
\end{equation*}
with $\begin{pmatrix}
\alpha & \beta \\ 
\gamma & \delta
\end{pmatrix} \in \Xi$ and $\alpha=\alpha_1 + i \alpha_2$ and $\beta=\beta_1 + i \beta_2$, is a subset of the set of integers appearing in the second coordinate of vectors in the orbit $\mathcal{A}_1 . v_0^t$. From the definition of $\Xi$, this means that the set of integers of the form
\begin{equation*}
f_{a_0}(x,y,z,t) - a_0
\end{equation*}
with $x+iy$ and $z+it$ coprime in $\mathbb{Z}[i]$, $x+iy \equiv 1 \text{ or } i \bmod{2}$ and
\begin{equation*}
f_{a_0}(x,y,z,t)=A_0 x^2 + A_0 y^2 + 4D_0 z^2 + 4D_0 t^2 + 4xt B_0 - 4 yz B_0 + 4xz C_0 + 4yt C_0 
\end{equation*}
is a subset of the set of curvatures in the packing. Since, from our choice of $v_{\mathcal{P}}^t$, $A_0=a_0+b_0$ is odd, if $m$ is an odd integer which can be written $m=f_{a_0}(x,y,z,t)$ with $x+iy$ and $z+it$ coprime in $\mathbb{Z}[i]$, we automatically have that $x$ and $y$ are not of the same parity, i.e., that $x+iy \equiv 1 \text{ or } i \bmod{2}$.
\end{proof}

\begin{remark}
\begin{equation*}
\text{disc}(f_{a_0})=16\left( A_0 D_0 - 4(B_0^2 + C_0^2) \right)^2 = 16 a_0^4
\end{equation*}
and one can check that $f_{a_0}$ is positive definite (since $A_0$ is positive, all the leading principal minors are positive).
\end{remark}

\begin{remark} \label{remk}
Let $z=z_1 + iz_2 \in \mathbb{Z}[i]$. Then,
\begin{align*}
\vert z_1 + iz_2 \vert^2 f_{a_0}(x_1,x_2,x_3,x_4) &= f_{a_0}(z_1 x_1- z_2 x_2, z_2 x_1 + z_1 x_2,z_1 x_3- z_2 x_4, z_2 x_3 + z_1 x_4) \\
&=f_{a_0} \left( \Re (z(x_1 + ix_2)), \Im (z(x_1 + ix_2)), \Re (z(x_3 + ix_4)), \Im (z(x_3 + ix_4))  \right) \, .
\end{align*}
\end{remark}

\subsection{The integers $\mathbb{Z}[i]$-primitively represented by $f_{a_0}$}\

We will study the integers coprime to $disc(f_{a_0})$ which can be written as $m=f_{a_0}(x,y,z,t)=f_{a_0}(x+iy,z+it)$ with $x+iy$ and $z+it$ coprime in $\mathbb{Z}[i]$. Such a representation will be called $\mathbb{Z}[i]$-primitive. We will use known results about the representation of integers by positive quaternary quadratic forms to obtain results about $\mathbb{Z}[i]$-primitive representations.
\\

To handle the coprimality in $\mathbb{Z}[i]$, we need the M\"{o}bius function generalized to Gaussian integers. We recall that any ideal $I$ of $\mathbb{Z}[i]$ can be factored as
\begin{equation*}
I=\mathfrak{p}_1^{\alpha_1} \dots \mathfrak{p}_k^{\alpha_k}
\end{equation*}
where the $\mathfrak{p}_i$ are prime ideals of $\mathbb{Z}[i]$. This factorization is unique, up to permutations of the factors. Using this factorization, we define the function $\mu$ on the ideals of $\mathbb{Z}[i]$ as
\begin{equation*}
\mu(I)=
\begin{cases} 
      0 & \text{ if } \alpha_i \geq 2 \text{ for some } i, \\
      (-1)^k & \text{ otherwise }.
\end{cases}
\end{equation*}
\\
Analogously to the case of the integers, $\mu$ is multiplicative, and we have
\begin{equation} \label{mob}
\sum_{\substack{J \text{ ideal of } \mathbb{Z}[i] \\ J \supset I}} \mu(J) = 
\begin{cases} 
      1 & \text{ if } I= (1),  \\
      0 & \text{ otherwise}.
\end{cases}
\end{equation}

From now on, $m$ will be an integer coprime with $disc(f_{a_0})$. Let $\mathcal{N}(m)$ be the number of representations of $m$ by $f_{a_0}$ and $\mathcal{N}_P(m)$ those which are $\mathbb{Z}[i]$-primitive. Remark \ref{remk} allows us to associate representations of $f_{a_0}(x,y,z,t)=m$ with $gcd_{\mathbb{Z}[i]}(x+iy,z+it)=\pi \in \mathbb{Z}[i]$ with $\mathbb{Z}[i]$-primitive representations of $\frac{m}{\vert \pi \vert^2}$ by $f_{a_0}$, and yields
\begin{equation*}
\mathcal{N}(m)=\sum_{\substack{\pi \in \mathbb{Z}[i] \\ \vert \pi \vert^2 | m}} \mathcal{N}_P \left( \frac{m}{\vert \pi \vert^2} \right) = 4 \hspace{-0.3cm} \sum_{\substack{I \text{ ideal of } \mathbb{Z}[i] \\ \textit{N}(I) | m}} \mathcal{N}_P \left( \frac{m}{\textit{N}(I)} \right) \, .
\end{equation*}
\\
Using (\ref{mob}), we can invert this relation
\begin{equation} \label{inv}
\mathcal{N}_P(m)= \frac{1}{4} \sum_{\substack{I \text{ ideal of } \mathbb{Z}[i] \\ \textit{N}(I) | m}} \mu(I) \mathcal{N} \left( \frac{m}{\textit{N}(I)} \right) \, .
\end{equation}

The asymptotic formula for $\mathcal{N}(m)$ is known (see for example Corollary 1 of \cite{HB} or Theorem 20.9 of \cite{IK}).
\begin{theorem}
\begin{equation*}
\mathcal{N}(m)=\frac{\pi^2}{2a_0^2} m \mathfrak{S}(m) + O(m^{\frac{3}{4}+\varepsilon})
\end{equation*}
for every $\varepsilon>0$, where
\begin{equation*}
\mathfrak{S}(m)=\prod_{p} \delta_p(n)
\end{equation*}
with
\begin{equation*}
\delta_p(n)=\lim_{k \rightarrow \infty} p^{-3k} \vert x \in \left( \mathbb{Z}/p^k \mathbb{Z} \right)^4 \text{ such that } f_{a_0}(x) \equiv m \bmod{p^k} \vert \, .
\end{equation*}
The implied constant depends only on $\varepsilon$.
\end{theorem}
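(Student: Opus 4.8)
The statement is the standard asymptotic for the number of representations of $m$ by a positive definite quaternary quadratic form, and the plan is to follow one of the two classical routes pointed to by the references: the circle method in the delta-method form of \cite{HB}, or the theory of weight-$2$ modular forms as in \cite{IK}. Either way the count splits as a main term (the expected value, equal to a product of local densities) plus an error coming from the genuine fluctuations. For the main term I would compute the singular integral and the singular series separately: the archimedean density of the affine quadric $\{f_{a_0} = m\}$ works out to $\frac{\pi^2}{2a_0^2}\,m$ (its precise shape forced by there being four variables and by $\operatorname{disc}(f_{a_0}) = 16 a_0^4$), while the product of the $p$-adic densities is $\mathfrak{S}(m) = \prod_p \delta_p(m)$, so the main term is $\frac{\pi^2}{2a_0^2}\,m\,\mathfrak{S}(m)$.

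In the modular language the same main term arises as the Eisenstein contribution, which I would use to organize the computation. Forming the theta series
\begin{equation*}
\theta_{f_{a_0}}(\tau) = \sum_{\mathbf{v} \in \mathbb{Z}^4} e^{2\pi i f_{a_0}(\mathbf{v}) \tau}, \qquad \operatorname{Im}(\tau) > 0,
\end{equation*}
whose $m$-th Fourier coefficient is exactly $\mathcal{N}(m)$, one checks from the transformation law for theta series of quadratic forms that $\theta_{f_{a_0}}$ is a holomorphic modular form of weight $2$ on a congruence subgroup $\Gamma_0(N)$ with a quadratic nebentypus, $N$ being determined by $\operatorname{disc}(f_{a_0})$. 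Decomposing $\theta_{f_{a_0}} = E + C$ into its Eisenstein and cuspidal parts, the Siegel--Weil identification of Eisenstein coefficients with local densities shows that the $m$-th coefficient of $E$ is precisely $\frac{\pi^2}{2a_0^2}\,m\,\mathfrak{S}(m)$. The standing hypothesis $\gcd(m, \operatorname{disc}(f_{a_0})) = 1$ keeps the densities $\delta_p(m)$ at the ramified primes under control and is what makes this identification clean.

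The crux, and the step I expect to be the main obstacle, is the error term: I must show that the remaining contribution is $O(m^{3/4+\varepsilon})$. Four variables is the borderline case where the naive minor-arc (equivalently, cuspidal) contribution is of the same order as the main term, so a genuine saving is needed and no trivial estimate suffices. In the circle method this saving comes from Kloosterman's refinement: the complete exponential sums attached to $f_{a_0}$ modulo $q$ reduce to Kloosterman sums, and Weil's bound $\ll q^{1/2+\varepsilon}$ for these, summed over the moduli, produces exactly the exponent $3/4$. Equivalently, one bounds the Fourier coefficients $c(m)$ of the cusp form $C$; since $C$ is a weight-$2$ theta cusp form, its coefficients are again governed by Kloosterman-type sums, and Weil's estimate gives $c(m) \ll m^{3/4+\varepsilon}$ without any appeal to the Ramanujan--Petersson bound (which would yield the stronger but unnecessary $m^{1/2+\varepsilon}$). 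Combining the main term with this error, and observing that the implied constant depends only on $\varepsilon$, gives the stated asymptotic.
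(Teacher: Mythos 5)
Your proposal is correct and is essentially the paper's own approach: the paper offers no proof at all, simply quoting this as a known result via Corollary 1 of \cite{HB} and Theorem 20.9 of \cite{IK}, which are exactly the two routes (Heath--Brown's delta-method form of the circle method, and the theta-series decomposition into Eisenstein plus cuspidal parts with the Kloosterman--Weil saving giving the exponent $\frac{3}{4}+\varepsilon$) that you outline. Your sketch correctly identifies the main term as the singular series times the archimedean density and the error bound as the genuine content of the theorem, so it fills in, accurately, what the paper delegates to the references.
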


From this and (\ref{inv}), we obtain
\begin{equation} \label{equaref}
\mathcal{N}_P(m)=\frac{\pi^2}{8a_0^2} m \sum_{\substack{I \text{ ideal of } \mathbb{Z}[i] \\ \textit{N}(I) | m}} \frac{\mu(I)}{\textit{N}(I)} \mathfrak{S} \left( \frac{m}{\textit{N}(I)} \right) + O \left( m^{\frac{3}{4}+\varepsilon} \sum_{\substack{I \text{ ideal of } \mathbb{Z}[i] \\ \textit{N}(I) | m}} \frac{1}{\textit{N}(I)^{\frac{3}{4}+\varepsilon}} \right) \, .
\end{equation}

\subsubsection{The error term}\

\begin{lemma} \label{lemmaerror}
The error term in (\ref{equaref}) is $O(m^{\frac{3}{4}+\varepsilon})$ for any $\varepsilon > 0$ (and the implied constant depends only on $\varepsilon$).
\end{lemma}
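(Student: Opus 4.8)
The plan is to show that the entire error term is controlled by the crude factor $m^{3/4+\varepsilon}$ together with a sum over ideals that contributes only $m^{o(1)}$. Reading off the error term from (\ref{equaref}), it suffices to prove that
\begin{equation*}
\Sigma(m) := \sum_{\substack{I \text{ ideal of } \mathbb{Z}[i] \\ \textit{N}(I) | m}} \frac{1}{\textit{N}(I)^{\frac{3}{4}+\varepsilon}} \ll_{\delta} m^{\delta}
\end{equation*}
for every $\delta > 0$; combining this with the prefactor gives a total error of $O(m^{3/4+\varepsilon+\delta})$, and since $\varepsilon$ and $\delta$ are arbitrary this is $O(m^{3/4+\varepsilon'})$ for any $\varepsilon' > 0$.

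First I would collect the ideals according to their norm. Letting $a_d$ denote the number of ideals of $\mathbb{Z}[i]$ of norm $d$, the sum $\Sigma(m)$ becomes a sum over the rational-integer divisors of $m$,
\begin{equation*}
\Sigma(m) = \sum_{d | m} \frac{a_d}{d^{\frac{3}{4}+\varepsilon}} \leq \sum_{d | m} a_d \, .
\end{equation*}
This is the crucial reduction: although $3/4+\varepsilon < 1$ for the relevant small $\varepsilon$, so that a sum of $\textit{N}(I)^{-(3/4+\varepsilon)}$ over all ideals of norm up to $m$ would diverge, here we are only summing over the \emph{sparse} set of ideals whose norm divides $m$.

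The key step is then to bound $a_d$. From the factorization of the Dedekind zeta function $\zeta_{\mathbb{Q}(i)}(s) = \zeta(s) L(s,\chi)$, where $\chi$ is the non-trivial character modulo $4$, one reads off $a_d = \sum_{e | d} \chi(e)$, whence $0 \leq a_d \leq d(d)$, the ordinary number-of-divisors function. Therefore
\begin{equation*}
\Sigma(m) \leq \sum_{d | m} d(d) \, ,
\end{equation*}
and the right-hand side is a multiplicative function of $m$ whose value at a prime power $p^k$ is $\sum_{j=0}^{k} (j+1) = \binom{k+2}{2}$. The standard divisor-bound argument (any multiplicative $f$ with $f(p^k) \ll_{\delta} p^{k\delta}$ satisfies $f(m) \ll_{\delta} m^{\delta}$) then yields $\sum_{d|m} d(d) \ll_{\delta} m^{\delta}$, which is exactly the bound claimed for $\Sigma(m)$.

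I do not expect a genuine obstacle here: the estimate is routine once one recognizes that the sum over ideals with $\textit{N}(I) \mid m$ is an ordinary divisor sum of size $m^{o(1)}$. The single point that must not be overlooked is precisely the sparsity noted above—the exponent $3/4+\varepsilon$ lies below the abscissa of convergence of $\zeta_{\mathbb{Q}(i)}$, so the decay $\textit{N}(I)^{-(3/4+\varepsilon)}$ alone is not summable, and the finiteness of $\Sigma(m)$ rests entirely on the restriction to divisors of $m$ rather than on the size of the summands.
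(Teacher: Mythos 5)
Your argument is correct and follows essentially the same route as the paper: both reduce the sum over ideals with $\textit{N}(I)\mid m$ to a divisor sum $\sum_{d\mid m}$, bound the number of ideals of norm $d$ by $\tau(d)$ (the paper via $r_2(d)/4\ll\tau(d)$, you via $a_d=\sum_{e\mid d}\chi(e)$, which is the same fact), and conclude that the whole sum is $m^{o(1)}$. The only cosmetic difference is that the paper keeps the factor $d^{-(3/4+\varepsilon)}$ and lands on the explicit bound $\tau(m)^2$, while you discard it and invoke the standard divisor bound for the multiplicative function $\sum_{d\mid m}\tau(d)$; both are fine.
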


\begin{proof}
Let $r_2(d)$ be the number of representations of $d$ as a sum of two squares. Since $r_2(d) \ll \tau(d)$,
\begin{align*}
\sum_{\substack{I \text{ ideal of } \mathbb{Z}[i] \\ \textit{N}(I) | m}} \frac{1}{\textit{N}(I)^{\frac{3}{4}+\varepsilon}} &= \frac{1}{4} \sum_{d|m} \frac{r_2(d)}{d^{\frac{3}{4}+\varepsilon}} \\
& \ll \sum_{d|m} \frac{\tau(d)}{d^{\frac{3}{4}+\varepsilon}} \\
& \ll \tau(m) \sum_{d|m} \frac{1}{d^{\frac{3}{4}+\varepsilon}} \\
& \ll \tau(m)^2 \, .
\end{align*}
Therefore, the error term is $O(m^{\frac{3}{4}+\varepsilon})$ for any $\varepsilon > 0$.
\end{proof}
\subsubsection{The main term}\

To get a more explicit expression for the main term, we need a better understanding of the local densities $\delta_p (\frac{m}{\textit{N}(I)})$. This can be achieved using the following lemma (see, for example, \cite{sie}).
\begin{lemma} \label{sieg}
Let $p \not | disc(f_{a_0})$, then
\begin{equation*}
\delta_p(m)=\left( 1-\frac{1}{p^2} \right) \left( 1+\frac{1}{p} \dots + \frac{1}{p^{v_p(m)}} \right)
\end{equation*}
where $v_p(m)$ is such that $p^{v_p(m)} || m$.
\end{lemma}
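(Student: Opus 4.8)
The final lemma (Lemma~\ref{sieg}) asserts that for a prime $p \nmid \operatorname{disc}(f_{a_0})$, the local density $\delta_p(m)$ of the quaternary form $f_{a_0}$ equals $\left(1 - \frac{1}{p^2}\right)\left(1 + \frac{1}{p} + \cdots + \frac{1}{p^{v_p(m)}}\right)$, where $p^{v_p(m)} \| m$.
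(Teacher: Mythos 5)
Your proposal contains no proof at all: after the heading ``The statement to prove'' you simply restate the lemma and stop. There is no argument establishing the claimed value of the local density, so the entire content of the proof is missing. For comparison, the paper itself does not prove this lemma either --- it cites it as a standard fact about local densities of quadratic forms at primes not dividing the discriminant (the reference \cite{sie}), so a citation-style justification would have been acceptable, but you give neither a citation nor a derivation.

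If you want to actually prove it, the outline is as follows. Since $p \nmid \operatorname{disc}(f_{a_0})$, the form $f_{a_0}$ is nondegenerate over $\mathbb{Z}_p$ and can be diagonalized there into a unimodular quaternary form. One then counts
\begin{equation*}
A_{p^k}(m) = \bigl\lvert \{ x \in (\mathbb{Z}/p^k\mathbb{Z})^4 : f_{a_0}(x) \equiv m \bmod{p^k} \} \bigr\rvert
\end{equation*}
for $k > v_p(m)$, either by writing the count as an exponential sum
\begin{equation*}
A_{p^k}(m) = \frac{1}{p^k} \sum_{h \bmod p^k} \sum_{x} \e\!\left( \frac{h(f_{a_0}(x)-m)}{p^k} \right)
\end{equation*}
and evaluating the resulting Gauss sums (the contribution of $h$ with $p^j \| h$ produces the geometric series $1 + \tfrac{1}{p} + \cdots + \tfrac{1}{p^{v_p(m)}}$, and the factor $1 - \tfrac{1}{p^2}$ comes from the $j=k$ versus $j<k$ terms for a rank-$4$ unimodular form), or by Hensel's lemma, which shows $A_{p^{k+1}}(m) = p^3 A_{p^k}(m)$ once $k \ge v_p(m)+1$, reducing the computation to a finite count. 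Either route is a genuine computation that your submission does not attempt.
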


\begin{lemma} \label{lemprim}
Let $(p,m)=1$. Then, $\delta_p(\frac{m}{\textit{N}(I)})=\delta_p(m)$ for any ideal $I$ of $\mathbb{Z}[i]$ with $\textit{N}(I) | m$.
\end{lemma}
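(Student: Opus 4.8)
The plan is to exploit the multiplicativity of $f_{a_0}$ recorded in Remark \ref{remk} together with the fact that $\mathbb{Z}[i]$ is a principal ideal domain. First I would observe that, since $p$ is prime and $(p,m)=1$, we have $p\nmid m$; as $N(I)\mid m$ this forces $p\nmid N(I)$, so both $m$ and $m/N(I)$ are prime to $p$ and $N(I)$ is a unit modulo every power $p^k$. Because $\mathbb{Z}[i]$ is a PID I can write $I=(\pi)$ for some $\pi=\pi_1+i\pi_2\in\mathbb{Z}[i]$, and then $N(I)=|\pi|^2$ with $|\pi|^2$ prime to $p$.

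The key idea is to build an explicit bijection, for each $k$, between the solution sets modulo $p^k$ of $f_{a_0}(\cdot)\equiv m/N(I)$ and of $f_{a_0}(\cdot)\equiv m$. In the notation of Remark \ref{remk}, consider the integral linear map $T_\pi$ on $(\mathbb{Z}/p^k\mathbb{Z})^4$ defined by
\begin{equation*}
T_\pi(x_1,x_2,x_3,x_4)=\bigl(\Re(\pi w_1),\,\Im(\pi w_1),\,\Re(\pi w_2),\,\Im(\pi w_2)\bigr),\qquad w_1=x_1+ix_2,\ w_2=x_3+ix_4.
\end{equation*}
By Remark \ref{remk} we have $f_{a_0}(T_\pi x)=|\pi|^2 f_{a_0}(x)=N(I)\,f_{a_0}(x)$. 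Viewing $T_\pi$ as multiplication by $\pi$ on each of the two copies of $\mathbb{C}\cong\mathbb{R}^2$, its determinant is $|\pi|^4=N(I)^2$, which is prime to $p$; hence $T_\pi$ is invertible over $\mathbb{Z}/p^k\mathbb{Z}$ for every $k$.

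Consequently $T_\pi$ restricts to a bijection from $\{x:\ f_{a_0}(x)\equiv m/N(I)\bmod p^k\}$ onto $\{y:\ f_{a_0}(y)\equiv N(I)\cdot(m/N(I))=m\bmod p^k\}$, so these two sets have the same cardinality for every $k$. Dividing by $p^{3k}$ and passing to the limit in the definition of $\delta_p$ then yields $\delta_p(m/N(I))=\delta_p(m)$, as claimed.

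The only point requiring care is the invertibility of $T_\pi$ modulo $p^k$, which is precisely where the hypothesis $p\nmid N(I)$ (equivalently $(p,m)=1$) is used; for primes $p\nmid\mathrm{disc}(f_{a_0})$ one could instead read the equality directly off Lemma \ref{sieg}, since then $\delta_p$ depends only on the $p$-adic valuation and $v_p(m)=v_p(m/N(I))=0$. I would nonetheless prefer the bijection argument, as it is uniform in $p$ and in particular handles the primes dividing the discriminant, where Lemma \ref{sieg} does not apply.
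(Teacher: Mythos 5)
Your proof is correct and rests on exactly the same ingredients as the paper's: the multiplicativity identity of Remark \ref{remk} applied to a generator $\pi$ of $I$, and the invertibility modulo $p^k$ of multiplication by $\pi$, which follows from $p\nmid N(I)$ (a consequence of $N(I)\mid m$ and $(p,m)=1$). The only difference is presentational: the paper detours through the orthogonality (exponential-sum) expression for the solution count $A_{p^k}$ and changes variables there, whereas you exhibit the bijection between the two solution sets directly, which is if anything the cleaner route to the same conclusion.
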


\begin{proof}
\begin{equation*}
\delta_p \left( \frac{m}{\textit{N}(I)} \right) = \lim_{k \rightarrow \infty} p^{-3k} A_{p^k} \left( \frac{m}{\textit{N}(I)} \right)
\end{equation*}
with
\begin{align*}
A_{p^k} \left( \frac{m}{\textit{N}(I)} \right) &=\vert x \in \left( \mathbb{Z}/p^k \mathbb{Z} \right)^4 \text{ such that } f_{a_0}(x) = \frac{m}{\textit{N}(I)} \bmod{p^k} \vert \\
&=\frac{1}{p^k} \sum_{x \in \left( \mathbb{Z}/p^k \mathbb{Z} \right)^4} \sum_{h \in \mathbb{Z}/p^k \mathbb{Z}} \e \left( \frac{(f_{a_0}(x) - \frac{m}{\textit{N}(I)})h}{p^k} \right) \, .
\end{align*}
Using the change of variables $h \longrightarrow \textit{N}(I)h$ (which is an isomorphism of $\mathbb{Z}/p^k \mathbb{Z}$ because $\textit{N}(I) | m$ and $(p,m)=1$) 
\begin{equation*}
A_{p^k} \left( \frac{m}{\textit{N}(I)} \right) = \frac{1}{p^k} \sum_{x \in \left( \mathbb{Z}/p^k \mathbb{Z} \right)^4} \sum_{h \in \mathbb{Z}/p^k \mathbb{Z}} \e \left( \frac{(\textit{N}(I)f(x) - m)h}{p^k} \right) \, .
\end{equation*}
$I=(z_1 + i z_2)$ for some $z_1 + i z_2 \in \mathbb{Z}[i]$ (since $\mathbb{Z}[i]$ is a principal ideal domain), therefore, using Remark \ref{remk},
\begin{equation*}
\textit{N}(I)f_{a_0}(x)=f_{a_0}(xA)
\end{equation*}
with
\begin{equation*}
A=\begin{pmatrix}
z_1 & z_2 & 0 & 0 \\ 
-z_2 & z_1 & 0 & 0 \\ 
0 & 0 & z_1 & z_2 \\ 
0 & 0 & -z_2 & z_1
\end{pmatrix} 
\end{equation*}
and the map $x \longrightarrow xA$ is an isomorphism of $\left( \mathbb{Z}/p^k \mathbb{Z} \right)^4$ (because $p \not \vert N(I) = det(A)$). Thus
\begin{align*}
A_{p^k} \left( \frac{m}{\textit{N}(I)} \right) &= \frac{1}{p^k} \sum_{x \in \left( \mathbb{Z}/p^k \mathbb{Z} \right)^4} \sum_{h \in \mathbb{Z}/p^k \mathbb{Z}} \e \left( \frac{(f_{a_0}(x) - m)h}{p^k} \right) \\
&= A_{p^k}(m) \, .
\end{align*}
\end{proof}

\begin{lemma} \label{lemmamain}
The main term in (\ref{equaref}) is
\begin{equation*}
\frac{\pi^2}{8a_0^2} m \mathfrak{S}(m) \prod_{\substack{p \equiv 1 \bmod{4} \\ p | m}} \left( 1-\frac{1}{p} \right)^2 \left( 1- \frac{1}{p^{v_p(m)+1}} \right)^{-2} \prod_{\substack{p \equiv 3 \bmod{4} \\ p^2 | m}} \left( 1-\frac{1}{p^2} \right) \left( 1- \frac{1}{p^{v_p(m)+1}} \right)^{-1} \, .
\end{equation*}
\end{lemma}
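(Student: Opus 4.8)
The plan is to evaluate the arithmetic sum
\[
T := \sum_{\substack{I \text{ ideal of } \mathbb{Z}[i] \\ N(I)\mid m}} \frac{\mu(I)}{N(I)}\,\mathfrak{S}\!\left(\frac{m}{N(I)}\right),
\]
and to show it equals $\mathfrak{S}(m)$ times the product of local correction factors in the statement; multiplying by $\frac{\pi^2}{8a_0^2}m$ then gives the main term of (\ref{equaref}). Since $\mathfrak{S}$ is a product of local densities, the whole computation is local, so the first move is to turn $T$ into an Euler product.

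First I would write $\mathfrak{S}(n) = \prod_q \delta_q(n)$ and separate the rational primes $q$ according to whether $q \mid m$ or $q \nmid m$. For $q \nmid m$ we have $(q,m)=1$, so Lemma \ref{lemprim} gives $\delta_q(m/N(I)) = \delta_q(m)$ for every admissible $I$; these factors are independent of $I$ and come out of the sum as $\prod_{q \nmid m}\delta_q(m)$. For $q \mid m$, note that by hypothesis $m$ is coprime to $\operatorname{disc}(f_{a_0}) = 16a_0^4$, so every such $q$ is odd and coprime to $a_0$, whence Lemma \ref{sieg} applies and $\delta_q(m/N(I))$ depends only on $v_q(m/N(I)) = v_q(m) - v_q(N(I))$. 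Writing each ideal as $I = \prod_p I_p$, with $\mu(I) = \prod_p \mu(I_p)$, $N(I) = \prod_p N(I_p)$ and each $N(I_p)$ a power of $p$, the valuation $v_q(N(I))$ sees only the part $I_q$ above $q$. The sum therefore factors as
\[
T = \Big(\prod_{q \nmid m}\delta_q(m)\Big)\prod_{p \mid m} S_p\big(v_p(m)\big), \qquad
S_p(v) := \sum_{\substack{I_p :\, N(I_p)\mid p^{v}}} \frac{\mu(I_p)}{N(I_p)}\,\delta_p\!\left(\frac{p^{v}}{N(I_p)}\right),
\]
the inner sum running over the ideals of $\mathbb{Z}[i]$ supported above $p$.

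Next I would compute each local factor $S_p(v)$ from the splitting type of $p$ in $\mathbb{Z}[i]$ together with the explicit geometric expression of Lemma \ref{sieg}. Only squarefree $I_p$ contribute. If $p \equiv 3 \bmod 4$, then $p$ is inert and the relevant ideals are just $(1)$ and $(p)$, of norms $1$ and $p^2$, the latter occurring only when $p^2 \mid m$; the two terms combine after a single step of the geometric series. If $p \equiv 1 \bmod 4$, then $p$ splits as $\mathfrak{p}\overline{\mathfrak{p}}$ and the relevant ideals are $(1), \mathfrak{p}, \overline{\mathfrak{p}}, (p)$ with $\mu$-values $+1,-1,-1,+1$ and norms $1,p,p,p^2$, so that $S_p(v)$ is the alternating combination of $\delta_p$ at valuations $v, v-1, v-2$ with coefficients $1, -2/p, 1/p^2$; substituting Lemma \ref{sieg} and telescoping the partial geometric sums produces the correction factor. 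The ramified prime $2$ and the primes dividing $a_0$ never arise here precisely because $m$ is coprime to the discriminant.

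Finally I would divide each $S_p(v_p)$ by $\delta_p(m)$, observe that reinserting these denominators turns $\prod_{q\nmid m}\delta_q(m)\cdot\prod_{p\mid m}\delta_p(m)$ back into $\mathfrak{S}(m) = \prod_q \delta_q(m)$, and collect the ratios $S_p(v_p)/\delta_p(m)$ into the two advertised products over $p \equiv 1$ and $p \equiv 3 \bmod 4$, with the $v_p$-dependent tails $(1-p^{-(v_p+1)})$ appearing to the exponents recorded in the statement. The main obstacle is the bookkeeping in the local computation: one must track the density as the $p$-adic valuation of the argument drops by $0,1,2$, treat the boundary cases $v_p(m)=1$ (where the ideal of top norm leaves the sum) separately, and carry out the cancellation of geometric tails carefully enough that the $v_p$-dependent factors come out in closed form. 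Everything else is formal, resting only on the multiplicativity of $\mu$ and $N$ and on Lemmas \ref{lemprim} and \ref{sieg}.
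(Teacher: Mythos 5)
Your strategy is the same as the paper's: factor the M\"obius sum $\sum_{N(I)\mid m}\mu(I)N(I)^{-1}\mathfrak{S}(m/N(I))$ into an Euler product over rational primes using Lemmas \ref{lemprim} and \ref{sieg} together with the multiplicativity of $\mu$ and $N$, then evaluate the local factor at each $p\mid m$ according to the splitting of $p$ in $\mathbb{Z}[i]$. The inert case and all the structural bookkeeping in your outline are fine.

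However, the step you describe as ``telescoping the partial geometric sums produces the correction factor'' does not land on the stated formula at split primes, and you should carry it out explicitly. Writing $s_j=1+p^{-1}+\dots+p^{-j}$ and $v=v_p(m)$, your alternating combination with coefficients $1,-2/p,1/p^2$ gives, after dividing by $\delta_p(m)$,
\[
\frac{s_v-\tfrac{2}{p}s_{v-1}+\tfrac{1}{p^2}s_{v-2}}{s_v}=\frac{(1-p^{-1})^2}{1-p^{-(v+1)}}
\]
(valid also at $v=1$ with the convention $s_{-1}=0$, since the ideal $(p)$ of norm $p^2$ then drops out of the sum), i.e.\ the tail $(1-p^{-(v+1)})^{-1}$ appears to the first power, not squared as in the statement. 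The paper obtains the square by writing the local sum as $\bigl(1+g(\mathfrak{p})\bigr)\bigl(1+g(\overline{\mathfrak{p}})\bigr)$, which tacitly uses $g(\mathfrak{p}\overline{\mathfrak{p}})=g(\mathfrak{p})g(\overline{\mathfrak{p}})$; this fails because $g$ depends on $I$ only through $N(I)$, so $g(\mathfrak{p}\overline{\mathfrak{p}})=s_{v-2}/(p^2s_v)$ while $g(\mathfrak{p})g(\overline{\mathfrak{p}})=s_{v-1}^2/(p^2s_v^2)$, and $s_{v-1}^2\neq s_{v-2}s_v$. So your computation, done honestly, is the correct one, and the lemma as stated needs the exponent $-2$ on $(1-p^{-(v_p(m)+1)})^{-1}$ replaced by $-1$; the discrepancy is harmless downstream because Theorem \ref{positi} only uses that these tail factors are at least $1$, which holds either way. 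But as written your proposal asserts that the local evaluation reaches the displayed formula, and it will not; you must either correct the statement or flag the mismatch.
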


\begin{proof}
Using Lemma \ref{sieg} and Lemma \ref{lemprim},
\begin{align*}
\mathfrak{S} \left( \frac{m}{\textit{N}(I)} \right) &= \prod_{(p,m)=1} \delta_p \left( \frac{m}{\textit{N}(I)} \right) \prod_{p|m} \delta_p \left( \frac{m}{\textit{N}(I)} \right) \\
&=  \prod_{(p,m)=1} \delta_p(m) \prod_{p|m} \left( 1-\frac{1}{p^2} \right) \left( 1+\frac{1}{p} \dots + \frac{1}{p^{v_p(\frac{m}{\textit{N}(I)})}} \right) \\
&= \mathfrak{S}(m) \prod_{p| \textit{N}(I)} \left( \frac{1+\frac{1}{p} \dots + \frac{1}{p^{v_p(\frac{m}{\textit{N}(I)})}}}{1+\frac{1}{p} \dots + \frac{1}{p^{v_p(m)}}} \right) \, .
\end{align*}
Therefore, the main term is given by
\begin{equation*}
\frac{\pi^2}{8a_0^2} m\mathfrak{S}(m) \hspace{-0.3cm} \sum_{\substack{I \text{ ideal of } \mathbb{Z}[i] \\ \textit{N}(I) | m}} g(I)
\end{equation*}
with
\begin{equation*}
g(I)=\frac{\mu(I)}{\textit{N}(I)} \prod_{p| \textit{N}(I)} \left( \frac{1+\frac{1}{p} \dots + \frac{1}{p^{v_p(\frac{m}{\textit{N}(I)})}}}{1+\frac{1}{p} \dots + \frac{1}{p^{v_p(m)}}} \right) \, .
\end{equation*}

In $\mathbb{Z}[i]$, there are only two possible types of norms for a prime ideal of norm $\ne 2$. Let $\mathfrak{p}$ be an ideal with $\textit{N}(\mathfrak{p}) \ne 2$, then
\begin{equation*}
\textit{N}(\mathfrak{p}) = p \text{ with } p \equiv 1 \bmod{4} \quad \text{ or } \quad \textit{N}(\mathfrak{p}) = p^2 \text{ with } p \equiv 3 \bmod{4} \, .
\end{equation*}
Reciprocally, for any $p \ne 2$, there exists exactly two ideals of norm $p$ if $p \equiv 1 \bmod{4}$ and a unique ideal of norm $p^2$ if $p \equiv 3 \bmod{4}$, and these ideals are prime.
\\
\\
Notice that $g$ is multiplicative, and that, if $\mathfrak{p}$ is a prime ideal of $\mathbb{Z}[i]$ with $\textit{N}(\mathfrak{p})|m$, we have
\begin{equation*}
g(\mathfrak{p})=\left( 1+\frac{1}{p} \dots + \frac{1}{p^{v_p(m)-1}} \right) \left( p(1+\frac{1}{p} \dots + \frac{1}{p^{v_p(m)}}) \right)^{-1} \text{ if } \textit{N}(\mathfrak{p})=p \text{ with } p \equiv 1 \bmod{4} \, ,
\end{equation*}
\begin{equation*}
g(\mathfrak{p})=\left( 1+\frac{1}{p} \dots + \frac{1}{p^{v_p(m)-2}} \right) \left( p^2(1+\frac{1}{p} \dots + \frac{1}{p^{v_p(m)}}) \right)^{-1} \text{ if } \textit{N}(\mathfrak{p})=p^2 \text{ with } p \equiv 3 \bmod{4} \, .
\end{equation*}
Therefore, the sum can be transformed into a product, and we get
\begin{align*}
\sum_{\substack{I \text{ ideal of } \mathbb{Z}[i] \\ \textit{N}(I) | m}} g(I) &= \prod_{\substack{p \equiv 1 \bmod{4} \\ p | m}} \left( 1 - \frac{1+\frac{1}{p} \dots + \frac{1}{p^{v_p(m)-1}}}{p(1+\frac{1}{p} \dots + \frac{1}{p^{v_p(m)}})} \right)^2 \prod_{\substack{p \equiv 3 \bmod{4} \\ p^2 | m}} \left( 1 - \frac{1+\frac{1}{p} \dots + \frac{1}{p^{v_p(m)-2}}}{p^2(1+\frac{1}{p} \dots + \frac{1}{p^{v_p(m)}})} \right) \\
&= \prod_{\substack{p \equiv 1 \bmod{4} \\ p | m}} \left( 1-\frac{1}{p} \right)^2 \left( 1- \frac{1}{p^{v_p(m)+1}} \right) ^{-2} \prod_{\substack{p \equiv 3 \bmod{4} \\ p^2 | m}} \left( 1-\frac{1}{p^2} \right) \left( 1- \frac{1}{p^{v_p(m)+1}} \right)^{-1} \, .
\end{align*}
\end{proof}

\begin{lemma} \label{pospos}
\begin{equation*}
\mathcal{N}_P(m)=\frac{\pi^2}{8a_0^2} m \mathfrak{S}(m) \hspace{-0.2cm} \prod_{\substack{p \equiv 1 \bmod{4} \\ p | m}} \hspace{-0.1cm} \left( 1-\frac{1}{p} \right)^2 \left( 1- \frac{1}{p^{v_p(m)+1}} \right) ^{-2} \hspace{-0.3cm} \prod_{\substack{p \equiv 3 \bmod{4} \\ p^2 | m}} \hspace{-0.1cm} \left( 1-\frac{1}{p^2} \right) \left( 1- \frac{1}{p^{v_p(m)+1}} \right)^{-1} \hspace{-0.2cm} + O(m^{\frac{3}{4}+\varepsilon}) \, .
\end{equation*}
\end{lemma}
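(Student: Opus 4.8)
The plan is to read the statement off directly from the decomposition already recorded in (\ref{equaref}), by substituting the two preceding lemmas. Recall that (\ref{equaref}) writes $\mathcal{N}_P(m)$ as a single main term, namely $\frac{\pi^2}{8a_0^2} m$ times the M\"obius-weighted sum $\sum_{N(I) \mid m} \frac{\mu(I)}{N(I)} \mathfrak{S}\!\left(\frac{m}{N(I)}\right)$, plus an explicit error term consisting of $m^{3/4+\varepsilon}$ times a divisor-sum over ideals of $\mathbb{Z}[i]$. So the entire content of this lemma is to evaluate the main term exactly and to bound the error term, both of which have been prepared in advance.

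First I would invoke Lemma \ref{lemmamain}, which evaluates exactly this main term in closed form as the advertised double product over the primes $p \equiv 1 \bmod 4$ with $p \mid m$ and the primes $p \equiv 3 \bmod 4$ with $p^2 \mid m$. The key point there was that the arithmetic function $g(I) = \frac{\mu(I)}{N(I)}\prod_{p \mid N(I)}(\cdots)$ is multiplicative, so the sum $\sum_{N(I) \mid m} g(I)$ factors as an Euler product, and each local factor telescopes to the stated shape using the two prime-splitting types in $\mathbb{Z}[i]$ (one ramified/split prime of norm $p$ for $p \equiv 1 \bmod 4$, one inert prime of norm $p^2$ for $p \equiv 3 \bmod 4$) together with the explicit local densities from Lemmas \ref{sieg} and \ref{lemprim}. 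Then I would invoke Lemma \ref{lemmaerror}, which bounds the error sum in (\ref{equaref}) by $O(m^{3/4+\varepsilon})$ via $r_2(d) \ll \tau(d)$ and the divisor estimate $\sum_{d \mid m} \tau(d) d^{-3/4-\varepsilon} \ll \tau(m)^2 \ll_\varepsilon m^{\varepsilon}$. Adding the closed-form main term to this $O(m^{3/4+\varepsilon})$ error yields precisely the asserted asymptotic for $\mathcal{N}_P(m)$.

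Since both the exact evaluation of the main term and the bound on the error term are already established, there is no genuine obstacle remaining: this lemma is purely an assembly step. The only thing to verify is that the main term appearing in (\ref{equaref}) coincides termwise with the expression evaluated in Lemma \ref{lemmamain} — the factor $\frac{\pi^2}{8a_0^2} m\, \mathfrak{S}(m)$ and the two products over $p$ — which holds by construction. If I were to flag where care is genuinely needed, it is not here but one step earlier, in confirming that the multiplicativity of $g$ and the telescoping of each Euler factor in Lemma \ref{lemmamain} were carried out correctly; granting those, the present statement follows immediately.
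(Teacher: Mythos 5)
Your proposal matches the paper's proof exactly: Lemma \ref{pospos} is proved there in one line as a direct consequence of Lemmas \ref{lemmaerror} and \ref{lemmamain}, which is precisely the assembly you describe. (One cosmetic slip in your recap of Lemma \ref{lemmamain}: for $p \equiv 1 \bmod 4$ there are \emph{two} prime ideals of norm $p$, which is why that Euler factor appears squared; this does not affect the present step.)
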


\begin{proof}
This is a direct consequence of Lemmas \ref{lemmaerror} and \ref{lemmamain}.
\end{proof}

\subsection{The local-global principle}\

\begin{theorem} \label{positi}
Let $m$ be an integer coprime with $disc(f_{a_0})$. Suppose that the equation $f_{a_0}(x) \equiv m \bmod{p}$ has solutions for every $p|disc(f_{a_0})$, $p \ne 2$, and that the equation $f_{a_0}(x) \equiv m \bmod{8}$ has solutions. Then, for such an $m$, with $m$ large enough, $m$ is $\mathbb{Z}[i]$-primitively represented by $f_{a_0}$.
\end{theorem}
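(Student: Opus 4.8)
The plan is to combine the asymptotic formula for $\mathcal{N}_P(m)$ obtained in Lemma \ref{pospos} with the local solvability hypotheses, showing that the main term is strictly positive and eventually dominates the error term. The strategy rests on the classical principle that, for a positive definite quaternary form, the singular series $\mathfrak{S}(m)$ is bounded below by a positive quantity whenever the relevant local conditions are satisfied, so that the count $\mathcal{N}_P(m)$ grows like a positive constant times $m$, overwhelming the $O(m^{3/4+\varepsilon})$ error.

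First I would analyze the singular series $\mathfrak{S}(m)=\prod_p \delta_p(m)$ together with the two correction products appearing in Lemma \ref{pospos}, splitting the primes into three classes: $p \nmid \text{disc}(f_{a_0})$ with $p\ne 2$, the bad primes $p \mid \text{disc}(f_{a_0})=16a_0^4$ with $p\ne 2$, and $p=2$. For the good primes, Lemma \ref{sieg} gives $\delta_p(m)=\left(1-p^{-2}\right)\left(1+\cdots+p^{-v_p(m)}\right)$, which lies between $1-p^{-2}$ and $1$, and one checks that when combined with the correction factors the product $\prod_{p\nmid\text{disc}} \delta_p(m)\cdot(\text{correction})_p$ converges to a strictly positive constant uniformly in $m$ (the factors are each $1+O(p^{-2})$, so the Euler product converges absolutely). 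For the bad primes $p\mid\text{disc}(f_{a_0})$, $p\ne 2$, the hypothesis that $f_{a_0}(x)\equiv m \bmod p$ is solvable, combined with Hensel lifting (which applies since $p$ is odd and the form, though degenerate mod $p$, still lifts solutions under the given coprimality-to-disc assumption on $m$), yields $\delta_p(m)>0$; there are only finitely many such primes, so their contribution is bounded below by a positive constant depending only on $f_{a_0}$. For $p=2$, the hypothesis that $f_{a_0}(x)\equiv m \bmod 8$ is solvable is exactly the condition needed to guarantee $\delta_2(m)>0$ via the standard $2$-adic lifting criterion for quaternary forms.

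The main obstacle I anticipate is the careful $2$-adic analysis: controlling $\delta_2(m)$ is delicate because quaternary forms over $\mathbb{Z}_2$ require lifting solutions modulo $8$ (rather than modulo $2$) to obtain a nonzero $2$-adic density, and one must verify that the mod-$8$ solvability hypothesis is precisely the right threshold for this form, whose discriminant is $16a_0^4$ and hence divisible by a high power of $2$. I would handle this by invoking the standard local solubility criterion for $2$-adic representation (the form is anisotropic or isotropic over $\mathbb{Q}_2$ depending on $a_0$, but in either case mod-$8$ solvability forces $\delta_2(m)>0$ for a quaternary form), citing the relevant lifting lemma. Once all three classes give uniformly positive lower bounds, I conclude $\mathfrak{S}(m)\cdot(\text{corrections}) \gg 1$ with an implied constant independent of $m$, so that the main term in Lemma \ref{pospos} is $\gg m$.

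Finally, combining $\mathcal{N}_P(m) = (\text{main term}) + O(m^{3/4+\varepsilon})$ with the lower bound $(\text{main term})\gg m$, I would choose $\varepsilon<1/4$ and deduce that $\mathcal{N}_P(m)\gg m - O(m^{3/4+\varepsilon})>0$ for all sufficiently large $m$ satisfying the hypotheses. A positive value of $\mathcal{N}_P(m)$ means $m$ admits at least one $\mathbb{Z}[i]$-primitive representation by $f_{a_0}$, which is exactly the assertion. The only subtlety to flag is that the implied constants in the error term depend on $\varepsilon$ but not on $m$, so the threshold ``$m$ large enough'' is uniform over all admissible $m$ once $\varepsilon$ is fixed.
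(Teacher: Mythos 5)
Your overall strategy coincides with the paper's: start from Lemma \ref{pospos}, show the main term is positive and dominates the $O(m^{3/4+\varepsilon})$ error, and use the local solvability hypotheses at the bad primes (the paper invokes Lemma 13 of \cite{sie} to reduce $\delta_p(m)$ for $p \mid disc(f_{a_0})$, $p \neq 2$, to a count modulo $p$, and $\delta_2(m)$ to a count modulo $8$, which is in substance the lifting argument you sketch). However, there is one concrete error in your treatment of the good primes. You claim that the combined local factors $\delta_p(m)\cdot(\text{correction})_p$ are each $1+O(p^{-2})$, so that their product over $p \nmid disc(f_{a_0})$ is bounded below by a positive constant uniformly in $m$. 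This fails for primes $p \equiv 1 \bmod 4$ with $p \mid m$: writing $v=v_p(m)\geq 1$, Lemma \ref{sieg} gives $\delta_p(m)=(1-p^{-2})\bigl(1-p^{-(v+1)}\bigr)/(1-p^{-1})$, and the correction factor from Lemma \ref{pospos} is $(1-p^{-1})^2\bigl(1-p^{-(v+1)}\bigr)^{-2}$, so the combined factor equals $(1-p^{-2})(1-p^{-1})\bigl(1-p^{-(v+1)}\bigr)^{-1}=1-p^{-1}+O(p^{-2})$. The product of such factors over the primes $p\equiv 1 \bmod 4$ dividing $m$ is \emph{not} uniformly bounded below; it tends to $0$ along integers $m$ divisible by many small primes congruent to $1$ modulo $4$, so the step asserting a uniform positive constant would fail.

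The gap is easily repaired, and the paper's proof does exactly this: bound
\begin{equation*}
\prod_{\substack{p \equiv 1 \bmod{4} \\ p \mid m}} \left(1-\frac{1}{p}\right) \geq \prod_{p \leq m}\left(1-\frac{1}{p}\right) \sim \frac{\e^{-\gamma}}{\log m}
\end{equation*}
by Mertens' theorem, accept that the main term is only $\gg m/(\log m)^2$ rather than $\gg m$, and observe that this still dominates $O(m^{3/4+\varepsilon})$ for any fixed $\varepsilon<1/4$. With that single correction (weakening your claimed uniform lower bound on the singular-series product to a $1/(\log m)^2$ lower bound), your argument goes through and matches the paper's.
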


\begin{proof}
We use Lemma \ref{pospos}.
\\
\\
Using Mertens' formula,
\begin{equation*}
\prod_{\substack{p \equiv 1 \bmod{4} \\ p | m}} \left( 1-\frac{1}{p} \right) \geq \prod_{p \leq m} \left( 1-\frac{1}{p} \right) \sim \frac{\e^{-\gamma}}{\log m} \, .
\end{equation*}
Furthermore,
\begin{equation*}
\prod_{\substack{p \equiv 3 \bmod{4} \\ p^2 | m}} \left( 1-\frac{1}{p^2} \right) \geq \prod_{p} \left( 1-\frac{1}{p^2} \right) =\frac{1}{\zeta(2)}
\end{equation*}
and
\begin{equation*}
\prod_{\substack{p \equiv 1 \bmod{4} \\ p | m}} \left( 1- \frac{1}{p^{v_p(m)+1}} \right) ^{-1} \prod_{\substack{p \equiv 3 \bmod{4} \\ p^2 | m}} \left( 1- \frac{1}{p^{v_p(m)+1}} \right)^{-1} \geq 1 \, .
\end{equation*}

We now have to show that $\mathfrak{S}(m)$ is bounded away from zero. Using Lemma \ref{sieg},
\begin{equation*}
\prod_{p \not \vert disc(f_{a_0})} \delta_p(m) \gg 1 \, .
\end{equation*}
It remains to prove that $\delta_p(m)$ is bounded away from zero for the finite set of primes $p \vert disc(f_{a_0})$.
\\
\\
Using Lemma 13 of \cite{sie}, for $p \ne 2$, $\delta_p(m) = p^{-3}\vert x \in (\mathbb{Z}/p\mathbb{Z})^4 \text{ such that } f_{a_0}(x) \equiv m \bmod{p} \vert$.
\\
But, by assumption, there exist solutions to $f_{a_0}(x) \equiv m \bmod{p}$, therefore $\delta_p(m)>0$ for those $p$.
\\
\\
Finally, using Lemma 13 of \cite{sie} again, for $p=2$, $\delta_2(m) = \frac{1}{512}\vert x \in (\mathbb{Z}/8\mathbb{Z})^4 \text{ such that } f_{a_0}(x) \equiv m \bmod{8} \vert$. As before, this is positive.
\end{proof}

\begin{theorem}[The local-global principle]
Let 
\begin{equation*}
\mathcal{S} = \left\lbrace n \in \mathbb{Z}, n > 0 \text{ such that } n \equiv b_0 \bmod{4} \right\rbrace \, .
\end{equation*}
Then, an integer $m$ large enough with $gcd(m,a_0)=1$ is the curvature of some sphere in the packing if and only if $m \in \mathcal{S}$.
\end{theorem}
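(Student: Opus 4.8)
The plan is to prove the two implications separately; the forward (necessity) direction follows immediately from the structural lemmas, while the reverse (sufficiency) direction is where the representation theory of $f_{a_0}$ does the work. For necessity, suppose $m$ is the curvature of a sphere in $\mathcal{P}$ with $\gcd(m,a_0)=1$ and $m$ large. Since $a_0$ is even, coprimality forces $m$ to be odd, and for $m$ large it is positive (only the outer sphere can have negative curvature). By Lemma \ref{odd} every odd curvature is congruent to $b_0$ modulo $4$, so $m\equiv b_0\bmod 4$ and hence $m\in\mathcal{S}$.

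For sufficiency, let $m\in\mathcal{S}$ with $\gcd(m,a_0)=1$ and $m$ large, and set $n=m+a_0$. By Theorem \ref{curva} it suffices to show that $n$ is $\mathbb{Z}[i]$-primitively represented by $f_{a_0}$, since then $m=n-a_0=f_{a_0}(x,y,z,t)-a_0$ is a curvature. I would deduce this from Theorem \ref{positi}, whose three hypotheses I verify for $n$. First, $n$ is coprime to $\mathrm{disc}(f_{a_0})=16a_0^4$: it is odd (as $m$ is odd and $a_0$ even) and $\gcd(n,a_0)=\gcd(m,a_0)=1$, so $\gcd(n,16a_0^4)=1$. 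Second, the condition modulo $8$ holds because $m\equiv b_0\bmod 4$ gives $n\equiv a_0+b_0=A_0\bmod 4$ with $A_0$ odd, and the subform obtained at $z=t=0$ satisfies $f_{a_0}(1,0,0,0)=A_0$ and $f_{a_0}(1,2,0,0)=5A_0\equiv A_0+4\bmod 8$; these are exactly the two residues congruent to $A_0$ modulo $4$, so every $n\equiv A_0\bmod 4$ is represented modulo $8$.

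The remaining hypothesis, and the step I expect to be the main obstacle, is the solvability of $f_{a_0}(x)\equiv n\bmod p$ at odd primes $p\mid a_0$, where $f_{a_0}$ degenerates modulo $p$. Here I would use Remark \ref{remk}: up to the scaling of the second pair of variables, $f_{a_0}$ is a binary Hermitian form over $\mathbb{Z}[i]$ with diagonal $A_0,4D_0$ and an off-diagonal term built from $B_0,C_0$. If $p\nmid A_0$, the subform $A_0(x^2+y^2)$ represents every residue modulo $p$, because $x^2+y^2$ is the norm of $x+iy$ in $\mathbb{Z}[i]/p$ and this norm is onto $\mathbb{F}_p$ whether $p$ splits or is inert; if $p\nmid D_0$ one uses $4D_0(z^2+t^2)$ instead; and if $p$ divides both $A_0$ and $D_0$ but not both of $B_0,C_0$, then fixing a unit $w_1=x+iy$ and varying $w_2=z+it$ in the off-diagonal term already attains every residue. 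Thus $f_{a_0}$ represents all residues modulo $p$ unless $p$ divides all of $A_0,B_0,C_0,D_0$; but unwinding their definitions, together with $p\mid a_0$, shows this would force $p$ to divide every entry of $(a_0,b_0,c_0,d_0,\omega_0)$, contradicting the primitivity of $\mathcal{P}$ from Lemma \ref{inte}. With all three hypotheses verified, Theorem \ref{positi} yields a $\mathbb{Z}[i]$-primitive representation of $n$ for $n$ large, and subtracting $a_0$ exhibits $m$ as a curvature via Theorem \ref{curva}, completing the argument.
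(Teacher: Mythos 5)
Your proof is correct and follows essentially the same route as the paper: necessity via Lemma \ref{odd}, and sufficiency by setting $n=m+a_0$ and verifying the hypotheses of Theorem \ref{positi} (coprimality to the discriminant, the mod $8$ condition via $A_0$ odd, and solvability at odd $p\mid a_0$ via the primitivity of $(A_0,B_0,C_0,D_0)$), then invoking Theorem \ref{curva}. The only difference is that you spell out the steps the paper labels ``easy to check'' --- the explicit witnesses $f_{a_0}(1,0,0,0)=A_0$, $f_{a_0}(1,2,0,0)=5A_0$ and the surjectivity of the norm form $x^2+y^2$ modulo odd $p$ --- which is a welcome addition but not a different argument.
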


\begin{proof}
We will look for the integers $m$ satisfying the following conditions:
\begin{itemize}
\item $m+a_0$ is odd,
\item $gcd(m+a_0,disc(f_{a_0}))=1$,
\item $m+a_0$ is represented by $f_{a_0}$ modulo $8$ and modulo any odd prime dividing the discriminant.
\end{itemize}
From our choice of $v_{\mathcal{P}}^t$, $a_0+b_0$ is odd. As seen before, this implies that $f_{a_0}$ will only $\mathbb{Z}[i]$-primitively represent odd values. That is why we need $m+a_0$ to be odd.
\\
\\
We also want $gcd(m+a_0,disc(f_{a_0}))=gcd(m+a_0,16a_0^4)=1$ to apply Theorem \ref{positi}. But since $m+a_0$ is odd, it simply means $gcd(m,a_0)=1$.
\\
\\
We want $m+a_0$ to be represented by $f_{a_0}$ modulo $8$. But, since $A_0$ is odd, it is easy to check that $f_{a_0}$ represents exactly the two odd classes modulo $8$, $A_0$ and $5A_0 \bmod{8}$. We can therefore just look modulo $4$, and $m+a_0$ needs to be in the class $A_0 \bmod{4}$, i.e., $m \equiv b_0 \bmod{4}$. Notice that this condition contains the condition $m+a_0$ odd.
\\
\\
We want $m+a_0$ to be represented by $f_{a_0}$ modulo any odd prime dividing the discriminant. Notice that, since $A_0$ is odd, 
\begin{align*}
gcd(A_0,B_0,C_0,D_0) &= gcd(a_0+b_0,a_0+b_0+c_0+d_0-2\omega_0,a_0+b_0+c_0-d_0,a_0+c_0) \\
&=gcd(a_0+b_0,a_0+c_0,a_0+d_0,a_0+\omega_0) \, .
\end{align*}
But, from equation (\ref{eqmod}),
\begin{align*}
p\ne 2 \text{ and } p \vert gcd(a_0+b_0,a_0+c_0,a_0+d_0,a_0+\omega_0) &\Rightarrow p|a_0 \\
&\Rightarrow p \vert gcd(a_0,b_0,c_0,d_0,\omega_0)
\end{align*}
which is a contradiction to the primitivity. Now, one can easily check that this means that $f_{a_0}$ can take all the possible values modulo $p$. Hence, this imposes no restriction on $m+a_0$, except the one already seen before $gcd(m,a_0)=1$.
\\
\\
From Theorem \ref{curva}, we know that the set of integers of the form
\begin{equation*}
f_{a_0}(x,y,z,t)-a_0 \quad gcd_{\mathbb{Z}[i]}(x+iy,z+it)=1
\end{equation*}
is a subset of the set of curvatures in $\mathcal{P}$. From Theorem \ref{positi} and the previous observations, we deduce that all the integers $m$ large enough with $gcd(m,a_0)=1$ and $m \equiv b_0 \bmod{4}$ are $\mathbb{Z}[i]$-primitively represented by $f_{a_0}(x,y,z,t)-a_0$, and therefore are curvatures of some sphere in the packing.
\\
\\
Reciprocally, from Lemma $\ref{odd}$, any curvature $m$ in the packing with $gcd(m,a_0)=1$ will verify $m \equiv b_0 \bmod{4}$.
\end{proof}

\begin{remark}
This result gives us that the set of curvatures in an integral generalized Apollonian sphere packing has positive density. More precisely, the density is at least
\begin{equation*}
\frac{1}{4} \prod_{\substack{p \vert n \\ p \ne 2}} \left( 1-\frac{1}{p} \right) \, .
\end{equation*}
\end{remark}

\bigskip

\nocite{*}
\bibliographystyle{alpha}
\bibliography{genspheres}

\textsc{\footnotesize D\'{e}partement de math\'{e}matiques et statistiques,
Universit\'{e} de Montr\'{e}al,
CP 6128 succ. Centre-Ville,
Montr\'{e}al QC H3C 3J7, Canada
}

\textit{\small Email address: }\texttt{\small dimitrid@dms.umontreal.ca}

\end{document}